\newcommand{\iid}{\stackrel{\mathrm{iid}}{\sim}}
\newtheorem{theorem}{Theorem}[section]
\newtheorem{lemma}[theorem]{Lemma}
\newtheorem{corollary}[theorem]{Corollary}
\newtheorem{remark}[theorem]{Remark}
\newtheorem{mydef}[theorem]{Definition}
\newtheorem*{assumptions}{Assumptions}
\DeclareMathOperator*{\argmin}{argmin}
\newcounter{num}
\renewcommand{\thenum}{\arabic{num}}
\newlength\figureheight
\newlength\figurewidth
\begin{document}

\title{Convergence and Rates for Fixed-Interval Multiple-Track\\ Smoothing Using
  $k$-Means Type Optimization}
\author{Matthew Thorpe}
\affil{Department of Mathematical Sciences, Carnegie Mellon University,\\ Pittsburgh, PA 15213, United States}
\author{Adam M. Johansen}
\affil{Department of Statistics, University of Warwick, Coventry, CV4 7AL, United Kingdom}
\date{}
\maketitle

\begin{abstract}
We address the task of estimating multiple trajectories from unlabeled
data. This problem arises in many settings, one could think of the
construction of maps of transport networks from passive observation of
travellers, or the reconstruction of the behaviour of uncooperative vehicles
from external observations, for example.
There are two coupled problems.
The first is a data association problem: how to map data points onto individual trajectories.
The second is, given a solution to the data association problem, to estimate those trajectories.
We construct estimators as a solution to a regularized variational problem (to
which approximate solutions can be obtained via the simple, efficient and
widespread $k$-means method) and show that, as the number of data points, $n$, increases, these estimators exhibit stable behaviour.
More precisely, we show that they converge in an appropriate Sobolev space in probability and with rate $n^{-1/2}$.
\end{abstract}

\section{Introduction \label{sec:Intro}}

Given observations from multiple moving targets we face two (coupled) problems.
The first is associating observations to targets: the data association problem.
The second is estimating the trajectory of each target given the appropriate set of observations.
When there is exactly one target the data association problem is trivial. 
However, when the number of targets is greater than one (even when the number of targets is known) the set of data association hypotheses grows combinatorially with the number of data points.
Very quickly it becomes infeasible to check every possibility.
Hence fast approximate solutions are needed in practice.

In this paper we interpret the problem of estimating multiple trajectories with unknown data association (see Figure~\ref{fig:Intro:SDAFig}) in such a way that the $k$-means method~\cite{lloyd82} may be applied to find a solution.
As in~\cite{thorpe15}, this is a non-standard application of the $k$-means
method in which we generalize the notion of a `cluster center' to partition finite dimensional data using infinite dimensional cluster centers. 
In this paper the cluster centers are trajectories in some function space and the data are space-time observations.

\begin{figure}
\centering
\setlength\figureheight{4cm}
\setlength\figurewidth{8cm}
\input{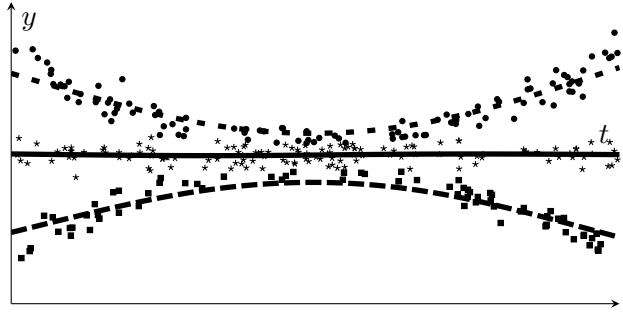}
\caption{
Unlabeled data is generated from three targets and using minimizers of~\eqref{eq:Intro:fn} we can find a partitioning of the data set and nonparametrically estimate each trajectory using the $k$-means algorithm.
}
\label{fig:Intro:SDAFig}
\end{figure}

Let $\Theta\subset (H^s)^k$ where $H^s$ is the Sobolev space of degree $s$ (where we
consider the case $s\geq 1$, see Section \ref{subsec:Prelim:Not} for a precise definition).
We have a data set $\{(t_i,y_i)\}_{i=1}^n\subset [0,1]\times \mathbb{R}^d$ and a model for the observation process
\begin{equation} \label{eq:Intro:model}
y_i = \mu_{\varphi(i)}^\dagger(t_i) + \epsilon_i
\end{equation}
where $\mu^\dagger = (\mu^\dagger_1,\dots,\mu^\dagger_k)$ is some unknown function, $\epsilon_i\iid\phi_0$ and $t_i\iid\phi_T$ for densities $\phi_0$ and $\phi_T$ on $[0,1]$ and $\mathbb{R}^d$ respectively.
We assume that the index of the cluster responsible for any given observation is an independent random variable with a categorical distribution of parameter vector $p = (p_1,\ldots,p_k)$, writing $\varphi(i) \sim\textrm{Cat}(p)$ to mean $\mathbb{P}(\varphi(i) = j)=p_j$.
This assumptions allow us to write the density of $y$ given $t$ (and, implicitly, the cluster centres), which we denote by $\phi_Y(y|t)$, as
\[ \phi_Y(y|t) = \sum_{j=1}^k p_j \phi_0(y-\mu^\dagger_j(t)). \]
We can summarize the stylized data generating process as follows.
A cluster is selected at random: $\mathbb{P}(\varphi = j)=p_j$, the time and observation error are drawn independently from their respective distributions, $t \sim \phi_T$, and $\epsilon \sim \phi_0$; and we observe $(t,y=\mu^\dagger_\varphi(t)+\epsilon)$.

The aim is to estimate $\mu^\dagger=(\mu^\dagger_1,\dots,\mu^\dagger_k)\in \Theta$ from observed data $\{(t_i,y_i)\}_{i=1}^n$.
In particular the data association
\[ \varphi:\{1,2,\dots,n\}\to\{1,2,\dots,k\} \]
is unknown.
With a single trajectory ($k=1$) the problem is precisely the spline smoothing problem, see for example \cite{wahba90}.
For $k>1$ trajectories there is an additional data association problem coupled to the spline smoothing problem.
We call this the smoothing-data association (SDA) problem.
Although the estimator $\mu^n$ we propose is not necessarily a consistent estimator for $\mu^\dagger$ (we do not show $\mu^n\to \mu^\dagger$) we do consider our estimator a natural choice.
We believe it is possible to bound the asymptotic error $\lim_{n\to \infty} \|\mu^n - \mu^\dagger \|_{(L^2)^k} \leq C$ where $C$ depends on the distribution of the data points, however it is beyond the scope of this work to show such a bound.
We refer to~\cite[Section 4.5]{levrard15} for a bound of the type $\|\mu^\infty-\mu^\dagger\|\leq C$, where $\mu^\infty = \lim_{n\to \infty} \mu^n$, for $k$-means in Hilbert spaces.

We assume $k$ is fixed and known.
The aim of this paper is to construct a sequence of estimators $\mu^n$ of $\mu^\dagger$ based upon increasing sets of observations $\{(t_i,y_i)\}_{i=1}^n$ and to study their asymptotic behavior as $n\to \infty$.
For each $n$ our estimate is given as a minimizer of $f_n:\Theta\to\mathbb{R}$ defined by
\begin{equation} \label{eq:Intro:fn}
f_n(\mu) = \frac{1}{n} \sum_{i=1}^n \bigwedge_{j=1}^k |y_i - \mu_j(t_i)|^2 + \lambda \sum_{j=1}^k \| \nabla^s \mu_j \|^2_{L^2}
\end{equation}
where $|\cdot |$ is the Euclidean norm on $\mathbb{R}^d$, $\bigwedge_{j=1}^k z_j = \min\{z_1,\dots, z_k\}$ and $\lambda$ is a positive constant.
Penalizing the $s^{\text{th}}$ derivative ensures that the problem is well posed.
Optimizing this function can be interpreted as seeking a hard data association: given $\mu\in\Theta$ each observation $(t_i,y_i)$ is associated with the trajectory closest to it so the corresponding data association solution is given by
\[ \varphi^\mu(i) = \argmin_{j=1,2,\dots,k} |\mu_j(t_i)-y_i|. \]
As with many ill-posed inverse problems with a data association component recovering the `true' values of the (infinite-dimensional) parameters is in general infeasible.
Two approaches are possible: to impose strong parametric assumptions, reducing the problem to that of inferring a (finite-dimensional) collection of parameters (which will perform poorly when those assumptions are inappropriate) or to proceed nonparametrically, optimising a cost function which balances the trade-off between a good fit to the data and regularity of the solution (which requires the precise specification of the notion of regularity).
In this paper we pursue the second route, showing that in the large data limit the proposed estimators behave well.
The main contribution of this paper is to establish the stability of $k$-means like estimators to the SDA problem.   

Although exact solution of the underlying optimization problem is NP-complete even in benign Euclidean settings~\cite{dasgupta09}, the computational cost of iterative numerical approximation has been shown to have a polynomial (smoothed) cost in certain Euclidean settings, e.g. \cite{arthur09}, and in practice the performance is often much better than these bounds would suggest: it is accepted to be a numerically efficient method for obtaining approximate solutions (i.e. local minimizers).
Our empirical experience is that this property holds also within the context considered by this paper.
Our focus is upon the asymptotic properties of the ideal estimator and it is beyond the scope of this paper to  upper bound the computational complexity of the numerical iteration scheme. 
We do however point out that a key advantage of the $k$-means method is that it reduces the problem of solving the multiple target problem ($k>1$) to the problem of repeatedly solving the single target problem ($k=1$) which can be done efficiently with, for example, splines.

There are of course several variations of the $k$-means method, e.g. fuzzy $C$-means clustering~\cite{bezdek13} (a soft version of $k$-means closely-related to the EM algorithm \cite{dempster77}), $k$-medians clustering~\cite{bradley97} (an $L^1$ version of $k$-means), Minkowski metric weighted $k$-means~\cite{deamorim12} for which the analysis, particularly the convergence result in Theorem~\ref{thm:Conv:Conv}, could be easily adapted.
Indeed, for bounded noise, the weak convergence $k$-medians clustering is a special case of~\cite{thorpe15} and to extend the result to unbounded noise one can follow the strategy given in the proof of Theorem~\ref{thm:Conv:Bound}.
The strong convergence and rate of convergence will require a different approach as one loses differentiability when going from $L^2$ to $L^1$.

The choice of regularization scheme and, in particular, of $\lambda$ is not straightforward.
For $k=1$ there are many results in the spline literature on the selection of $\lambda=\lambda_n$ and the resulting asymptotic behavior as $n\to\infty$, see for example~\cite{aerts02,cox83,cox88,craven79,li87,nychka89,ragozin83,speckman85,speckman01,stone82,utreras81,utreras83,utreras85,wand99}.
In this case one has $\lambda_n\to 0$ and can expect $\mu^n$ to converge to $\mu^\dagger$.
Convergence is either with respect to a Hilbert scale, e.g. $L^2$, or in the dual space, i.e. weak convergence.
Using a Hilbert scale in effect measures the convergence in a norm weaker than $H^s$. 
We remark that when $k>1$ and $\lambda_n\to 0$ we would expect that minimizers $\mu^n$ converge to a minimizer $\mu^*$ of
\[ \int_0^1 \int_{\mathbb{R}^d} \bigwedge_{j=1}^k |y-\mu_j(t)|^2 \phi_Y(y|t) \phi_T(t) \, \mathrm{d} y \, \mathrm{d} t. \]
In particular we do not expect that $\mu^*=\mu^\dagger$, indeed even the $k$-means in Euclidean spaces is known to be asymptotically biased.
In this paper we do not take $\lambda_n\to 0$ which adds a further bias.

The approach we take, as is common in settings in which smooth solutions are expected, is to penalize the $s^{\text{th}}$ derivative.
By Taylor's Theorem we can write $H^s = \mathcal{H}_0 \oplus \mathcal{H}_1$ where
\begin{align*}
\mathcal{H}_0 & = \text{span}\left\{ \zeta_i(t)=\frac{t^{i}}{i!} : i=0,1,\dots,s-1 \right\}, \\
\mathcal{H}_1 & = \left\{ g\in H^s : \nabla^i g(0)=0 \text{ for all } i=0,1,\dots,s-1 \right\}.
\end{align*}
We use $\|\cdot\|_1=\|\nabla^s \cdot\|_{L^2}$ as the norm on $\mathcal{H}_1$ and denote the $\mathcal{H}_0$ norm by $\|\cdot\|_0$, and therefore we use the norm $\|\cdot\|_{H^s} = \|\cdot\|_0+\|\cdot\|_1$ on $H^s$ (which is equivalent to the usual Sobolev norm).
Since $\mathcal{H}_0$ is finite dimensional we are free to use any norm we choose without changing the topology.
We can view $H^s=\mathcal{H}_0\oplus\mathcal{H}_1$ as a multiscale decomposition of $H^s$.
The polynomial component represents a coarse approximation.
The regularization penalizes oscillations on the fine scale, i.e. in $\mathcal{H}_1$.

In the case $k=1$, $f_n$ is quadratic and one can find an explicit representation of $\mu^n$, i.e. there exists a random function $G_{n,\lambda}$ such that with probability one $\mu^n=G_{n,\lambda}\nu^n$ for some function $\nu^n$ which depends on the data.
When $k>1$ the problem is no longer convex and the methodology used in the $k=1$ case fails.

The first result of this paper (Theorem~\ref{thm:Conv:Conv}) is a weak
convergence result, we show that there exists $\mu^\infty\in\Theta$ such that (up to subsequences) $\mu^n\rightharpoonup\mu^\infty$ a.s. in $H^s$ and $\mu^\infty$ is a minimizer of $f_\infty$ defined by
\begin{equation} \label{eq:Intro:finfty}
f_\infty(\mu) = \int_0^1 \int_{\mathbb{R}^d} \bigwedge_{j=1}^k |y- \mu_j(t)|^2 \; \phi_Y(y|t) \phi_T(t) \, \mathrm{d} y \, \mathrm{d} t + \lambda \sum_{j=1}^k \| \nabla^s \mu_j\|^2_{L^2}.
\end{equation}
One should note that if $\mu^\infty = (\mu^\infty_1,\dots,\mu^\infty_k)$ is a minimizer of $f_\infty$ then so
is $\tilde{\mu}^\infty = (\mu^\infty_{\rho(1)},\dots,\mu^\infty_{\rho(k)})$ for any permutation $\rho:\{1,\dots,k\} \to
\{1,\dots,k\}$ and therefore we do not expect uniqueness of the minimizer.
Considering the law of large numbers the limit $f_\infty$ is natural.
The functional $f_\infty$ can be seen as a limit of $f_n$, the nature of which will be made rigorous in Section~\ref{sec:Conv}.
The second result is to go from almost sure weak convergence to strong convergence in probability.
In other words, we obtain convergence of the minimizing sequence in a stronger
topology at the expense of considering a weaker mode of stochastic convergence.

We recall that one motivation for considering the minimization problem~\eqref{eq:Intro:fn} is to embed the problem into a framework that allows the application of the $k$-means method.
Large data limits for the $k$-means have been studied extensively in finite dimensions, see for example~\cite{antos05,bartlett98,chou94,hartigan78,linder94,pollard81,pollard82,pollard82a}.
There are fewer results for the infinite dimensional case, with~\cite{auder12,cuesta88,cuesta07,fischer10,laloe10,levrard15,lember03,linder02,tarpey03,thorpe15,biau08} the only results known to the authors.
Of these, only~\cite{thorpe15} can be applied to finite dimensional data and infinite dimensional cluster centers but required bounded noise and furthermore the conclusion were limited to weak convergence.
The first contribution of this paper is to extend this convergence result to unbounded noise for the SDA problem (Section~\ref{sec:Conv}).
We point out that \cite{auder12,biau08,laloe10,levrard15} give results for the convergence and rates of convergence of the minimum $\min f_n$ (in infinite dimensional settings) and~\cite{lember03} gives results for the convergence of the minimizers.

The result of Theorem~\ref{thm:Rate:StrongConv} is that, upto subsequences, the convergence is strong in $H^s$.
The final result is to show that the rate of convergence is of order $\frac{1}{\sqrt{n}}$ in probability.
I.e.
\[ \|\mu^n-\mu^\infty\|_{(H^s)^k} = O_p\left(\frac{1}{\sqrt{n}}\right). \]
This is closely related to the central limit theorem first proved for the $k$-means method by Pollard~\cite{pollard82} for Euclidean data.
We extend his methodology to cluster centers in $H^s$ to prove our rate of convergence result and in doing so provide a theoretical justification for using this method in the more complex scenario which we consider and, in particular, for using such approaches to address post hoc tracking of multiple targets using $k$-means type algorithms.
As with Pollard's finite dimensional result we require an assumption on the positive definiteness of the second derivative of the limiting function $f_\infty$.

In the next section we remind the reader of some preliminary material which underpins our main results.
Section~\ref{sec:Conv} contains the weak convergence result.
In Section~\ref{sec:Rate} we go from weak convergence to strong convergence with rates.

\section{Preliminaries \label{sec:Prelim}}

\subsection{Notation \label{subsec:Prelim:Not}}

The Borel $\sigma$-algebra on $[0,1]\times \mathbb{R}^d$ is denoted $\mathcal{B}([0,1]\times\mathbb{R}^d)$ and the set of probability measures on $([0,1]\times\mathbb{R}^d,\mathcal{B}([0,1]\times \mathbb{R}^d))$ by $\mathcal{P}([0,1]\times\mathbb{R}^d)$. 
Our main results concern sequences of data $\{(t_i,y_i)\}_{i=1}^\infty$ sampled independently with common law $P\in\mathcal{P}([0,1]\times \mathbb{R}^d)$ which is assumed to have a Lebesgue density, $\phi((t,y)) = \phi_Y(y|t) \phi_T(t)$.
We work throughout on a probability space $(\Omega,\mathcal{F},\mathbb{P})$ rich enough to support a countably infinite sequence of such observations, $(t_i,y_i):\Omega\to [0,1]\times \mathbb{R}^d$.
All random elements are defined upon this common probability space and all stochastic quantifiers are to be understood as acting with respect to $\mathbb{P}$ unless otherwise stated.
With a small abuse of notation we say $(t_i,y_i)\in [0,1]\times \mathbb{R}^d$.

We will define the space $\Theta\subset (H^s)^k$ in Section~\ref{sec:Conv}.
The Sobolev space $H^s$ is given by
\[ H^s := \left\{ \mu:[0,1]\to \mathbb{R}^d \text{ s.t. } \nabla^i\mu \text{ is absolutely continuous for } i=0,1,\dots,s-1 \text{ and } \nabla^s\mu \in L^2 \right\}. \]
Note that data is of the form $\{(t_i,y_i)\}_{i=1}^n\subset [0,1]\times \mathbb{R}^d$.

We denote weak convergence by $\rightharpoonup$: if $\nu^n,\nu\in H^s$ satisfies $F(\nu^n)\to F(\nu)$ for all $F\in (H^s)^*$ then $\nu^n\rightharpoonup \nu$.
A sequence of probability measures $P_n$ is said to weakly converge to $P$ if for all bounded and continuous functions $h$ we have
\[ P_nh \to P h. \]
Where we write $Ph = \int h(x) \; P(\text{d} x)$.
If $P_n$ weakly converges to $P$ then we write $P_n\Rightarrow P$. 

We use the following standard definitions for rates of convergence.
\begin{mydef}
\label{def:Prelim:Not:Rate}
We define the following.
\begin{itemize}
\item[(i)] For deterministic sequences $a_n$ and $r_n$, where $r_n$ are positive and real valued, we write $a_n=O(r_n)$ if $\frac{a_n}{r_n}$ is bounded.
If $\frac{a_n}{r_n}\to 0$ as $n\to \infty$ we write $a_n=o(r_n)$.
\item[(ii)] For random sequences $a_n$ and $r_n$, where $r_n$ are positive and real valued, we write $a_n=O_p(r_n)$ if $\frac{a_n}{r_n}$ is bounded in probability: for all $\epsilon>0$ there exist deterministic constants $M_\epsilon,N_\epsilon$ such that
\[ \mathbb{P}\left(\frac{\left\vert a_n \right\vert}{r_n}\geq M_\epsilon\right) \leq \epsilon \quad \quad \quad \forall n\geq N_\epsilon. \]
If $\frac{a_n}{r_n}\to 0$ in probability: for all $\epsilon>0$
\[ \mathbb{P}\left(\frac{\left\vert a_n \right\vert}{r_n}\geq \epsilon \right) \to 0 \quad \quad \quad \text{as } n\to \infty \]
we write $a_n=o_p(r_n)$.
\end{itemize}
\end{mydef}

When $a=a(r)$ can be written as a function of $r$ we will often write $a=O(r)$ or $a=o(r)$ to mean for any sequence $r_n\to 0$ that $a_n:=a(r_n)$ satisfies $a_n=O(r_n)$ or $a_n=o(r_n)$ respectively.

\subsection{\texorpdfstring{$\Gamma$}{Gamma}-Convergence \label{subsec:Prelim:Gam}}

Our proof of convergence will use a variational approach.
In particular the natural convergence for a sequence of minimization problems is $\Gamma$-convergence.
The $\Gamma$-limit can be understood as the `limiting lower semi-continuous envelope'.
It is particular useful when studying highly oscillatory functionals when there will often be no strong limit and the weak limit (if it exists) will average out oscillations and therefore change the behavior of the minimum and minimizers.
See~\cite{braides02,dalmaso93} for an introduction to $\Gamma$-convergence and~\cite{trillos15,trillos15b,thorpe15} for applications of $\Gamma$-convergence to problems in statistical inference.
We will apply the following definition and theorem to $\mathcal{H}=\Theta\subset(H^s)^k$. 

\begin{mydef}[$\Gamma$-convergence {\cite[Definition 1.5]{braides02}}]
\label{def:Prelim:Gamma:Gamcon}
Let $\mathcal{H}$ be a Banach space and $\Theta\subset \mathcal{H}$ be a weakly closed set.
A sequence $f_n :\Theta\to \mathbb{R}\cup \{\pm\infty\}$ is said to \textit{$\Gamma$-converge} on $\Theta$ to $f_\infty :\Theta\to \mathbb{R}\cup \{\pm\infty\}$ with respect to weak convergence on $\mathcal{H}$, and we write $f_\infty = \Gamma\text{-}\lim_n f_n$, if for all $\nu\in \Theta$ we have
\begin{itemize}
\item[(i)] (lim inf inequality) for every sequence $(\nu^n)\subset \Theta$ weakly converging to $\nu$
\[ f_\infty(\nu) \leq \liminf_n f_n(\nu^n); \]
\item[(ii)] (recovery sequence) there exists a sequence $(\nu^n)$ weakly converging to $\nu$ such that
\[ f_\infty(\nu) \geq \limsup_n f_n(\nu^n). \]
\end{itemize}
\end{mydef}

When it exists the $\Gamma$-limit is always weakly lower semi-continuous \cite[Proposition 1.31]{braides02} and therefore achieves its minimum on any weakly compact set.
An important property of $\Gamma$-convergence is that it implies the convergence of minimizers.
In particular, we will make use of the following result which can be found in \cite[Theorem 1.21]{braides02}.

\begin{theorem}[Convergence of Minimizers]
\label{thm:Prelim:Gamma:Conmin}
Let $\mathcal{H}$ be a Banach space, $\Theta\subset \mathcal{H}$ be a weakly closed set and $f_n: \Theta\to \mathbb{R}\cup\{\pm\infty\}$ be a sequence of functionals.
Assume there exists a weakly compact subset $K\subset \Theta$ with 
\[ \inf_{\Theta} f_n = \inf_K f_n \quad \forall n\in\mathbb{N}. \]
If $f_\infty = \Gamma\text{-}\lim_n f_n$ and $f_\infty$ is not identically $\pm\infty$ then
\[ \min_{\Theta} f_\infty = \lim_n \inf_{\Theta} f_n. \]
Furthermore if $\mu^n\in K$ minimizes $f_n$ then any weak limit point is a minimizer of $f_\infty$.
\end{theorem}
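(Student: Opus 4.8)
The plan is to establish $\min_\Theta f_\infty = \lim_n \inf_\Theta f_n$ by sandwiching $\inf_\Theta f_\infty$ between $\limsup_n \inf_\Theta f_n$ and $\liminf_n \inf_\Theta f_n$, and, in the course of the lower-bound argument, to exhibit a point of $\Theta$ at which $f_\infty$ attains its infimum; the assertion on minimizers then follows by reusing that same compactness argument. I would use only the two defining properties of the $\Gamma$-limit in Definition~\ref{def:Prelim:Gamma:Gamcon}, namely the lim inf inequality (i) and the existence of recovery sequences (ii), together with the hypothesis $\inf_\Theta f_n = \inf_K f_n$ and the weak compactness of $K$. Recall that a weakly compact subset of a Banach space is weakly closed (the weak topology is Hausdorff) and, by the Eberlein--\v{S}mulian theorem, weakly sequentially compact; in the application $\mathcal{H}=(H^s)^k$ is a Hilbert space, so this reduces to the familiar statement that bounded weakly closed sets are weakly sequentially compact.

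\emph{Upper bound.} Fix $\nu\in\Theta$; since $f_\infty\not\equiv+\infty$ we may take $\nu$ with $f_\infty(\nu)<+\infty$ (and if $f_\infty$ also takes the value $-\infty$ the argument below is unchanged under the usual conventions). By property (ii) there is a sequence $(\nu^n)\subset\Theta$ with $\nu^n\rightharpoonup\nu$ and $\limsup_n f_n(\nu^n)\leq f_\infty(\nu)$. Since $f_n(\nu^n)\geq\inf_\Theta f_n$ for every $n$, we get $\limsup_n \inf_\Theta f_n\leq f_\infty(\nu)$, and taking the infimum over $\nu\in\Theta$ yields $\limsup_n \inf_\Theta f_n\leq\inf_\Theta f_\infty$.

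\emph{Lower bound and attainment.} Pass to a subsequence (not relabelled) along which $\inf_\Theta f_n\to L:=\liminf_n \inf_\Theta f_n$. Using the hypothesis $\inf_\Theta f_n=\inf_K f_n$, pick $\mu^n\in K$ with $f_n(\mu^n)\leq\inf_\Theta f_n+\tfrac1n$. By weak sequential compactness of $K$ there is a further subsequence with $\mu^{n_k}\rightharpoonup\mu^\star$ for some $\mu^\star\in K\subset\Theta$, and property (i) gives
\[ f_\infty(\mu^\star)\leq\liminf_k f_{n_k}(\mu^{n_k})\leq\liminf_k\left(\inf_\Theta f_{n_k}+\tfrac{1}{n_k}\right)=L. \]
Hence $\inf_\Theta f_\infty\leq f_\infty(\mu^\star)\leq L=\liminf_n\inf_\Theta f_n$. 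Combined with the upper bound this forces $\limsup_n\inf_\Theta f_n\leq\inf_\Theta f_\infty\leq\liminf_n\inf_\Theta f_n$, so $\lim_n\inf_\Theta f_n$ exists and equals $\inf_\Theta f_\infty$; moreover $f_\infty(\mu^\star)\leq\inf_\Theta f_\infty$, so the infimum is attained and $\min_\Theta f_\infty=\lim_n\inf_\Theta f_n$.

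\emph{Minimizers.} If now $\mu^n\in K$ minimizes $f_n$ and $\mu^\star$ is any weak limit point, say $\mu^{n_k}\rightharpoonup\mu^\star$, then $\mu^\star\in K\subset\Theta$ because $K$ is weakly closed, and property (i) together with the first part gives $f_\infty(\mu^\star)\leq\liminf_k f_{n_k}(\mu^{n_k})=\liminf_k\inf_\Theta f_{n_k}=\lim_n\inf_\Theta f_n=\min_\Theta f_\infty$, so $\mu^\star$ minimizes $f_\infty$ on $\Theta$. I do not anticipate a genuine difficulty here: the only subtle points are the passage from weak to weak sequential compactness (Eberlein--\v{S}mulian, trivial in the reflexive setting actually used) and the bookkeeping of the degenerate cases where $f_\infty$ or the infima equal $\pm\infty$, which is exactly what the hypothesis $f_\infty\not\equiv\pm\infty$ and the standard conventions in $[-\infty,+\infty]$ take care of.
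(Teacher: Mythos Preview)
The paper does not supply its own proof of this theorem; it is quoted as a known result from \cite[Theorem 1.21]{braides02}, so there is no in-paper argument to compare your proposal against. Your proof is the standard one for this $\Gamma$-convergence statement (recovery sequences give the upper bound on $\limsup_n\inf_\Theta f_n$, equi-coercivity on $K$ plus the liminf inequality give the lower bound and attainment) and is correct as written; the only implicit step worth making explicit is that the liminf inequality in Definition~\ref{def:Prelim:Gamma:Gamcon} is inherited by subsequences, which is immediate from the definition and is what you use when applying property (i) along $(n_k)$.
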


\subsection{The G\^ateaux Derivative \label{subsec:Prelim:Gat}}

As in Section~\ref{subsec:Prelim:Gam} we will apply the following to $\mathcal{H}=\Theta\subset (H^s)^k$.

\begin{mydef} \label{def:Prelim:GatDer:GatDer}
We say that $f:\mathcal{H}\to \mathbb{R}$ is G\^ateaux differentiable at $\mu\in \mathcal{H}$ in direction $\nu\in \mathcal{H}$ if the limit
\[ \partial f(\mu;\nu) = \lim_{r\to 0^+} \frac{f(\mu+r\nu) - f(\mu)}{r} \]
exists.
We may define second order derivatives by
\[ \partial^2 f(\mu;\nu,\omega) = \lim_{r\to 0^+} \frac{\partial f(\mu+r\omega;\nu) - \partial f(\mu;\nu)}{r} \]
for $\mu,\nu,\omega\in \mathcal{H}$.
In cases where the second derivative does not necessarily exist we will define $\partial_-^2 f$ by
\[ \partial^2_- f(\mu;\nu,\omega) = \liminf_{r\to 0^+} \frac{\partial f(\mu+r\omega;\nu) - \partial f(\mu;\nu)}{r}. \]
To simplify notation, we write:
\[ \partial^2_- f(\mu;\nu):= \partial^2_- f(\mu;\nu,\nu). \]
\end{mydef}

\begin{theorem}
\label{thm:Prelim:GatDer:Taylor}
Let $\mu,\nu\in\mathcal{H}$.
If $f:\mathcal{H}\to \mathbb{R}$ is continuously G\^ateaux differentiable on the set $\left\{ t\mu + (1-t)\nu \, : \, t\in [0,1]\right\}$ then
\[ f(\nu) \geq f(\mu) + \partial f(\mu;\nu-\mu) + \frac12 \partial^2_- f((1-t^*)\mu + t^*\nu;\nu-\mu) \]
for some $t^*\in [0,1]$.
\end{theorem}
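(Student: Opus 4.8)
The plan is to reduce the statement to a one‑dimensional Taylor expansion along the segment joining $\mu$ and $\nu$. Write $\eta_t := (1-t)\mu + t\nu$ for $t\in[0,1]$ and set $g:[0,1]\to\mathbb{R}$, $g(t):=f(\eta_t)$, so that $g(0)=f(\mu)$ and $g(1)=f(\nu)$. Since $\eta_{t+r}=\eta_t+r(\nu-\mu)$, continuous G\^ateaux differentiability of $f$ along the segment yields: (i) $g$ has a right‑hand derivative $h(t):=\partial f(\eta_t;\nu-\mu)$ at each $t\in[0,1)$ and a left‑hand derivative $-\partial f(\eta_t;\mu-\nu)$ at each $t\in(0,1]$, so $g$ is continuous on $[0,1]$; and (ii) $t\mapsto h(t)$ is continuous on $[0,1]$. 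I would then upgrade (i)--(ii) to $g\in C^1([0,1])$ with $g'=h$: with $G(t):=g(0)+\int_0^t h(s)\,\mathrm{d}s$ (which is $C^1$ with $G'=h$), the function $w:=g-G$ is continuous, $w(0)=0$, and has right‑hand derivative $0$ everywhere; by the classical fact that a continuous function with $\underline{D}^+ w(t):=\liminf_{r\to 0^+}\frac{w(t+r)-w(t)}{r}\geq 0$ on $[0,1)$ is nondecreasing (and the mirror statement for $\leq 0$), $w\equiv 0$, so $g=G$. The short proof of this monotonicity fact — for $\varepsilon>0$ the function $w(t)+\varepsilon t$ is nondecreasing by a supremum/connectedness argument, then let $\varepsilon\downarrow 0$ — I would include.

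Next, by the fundamental theorem of calculus, $g(1)=g(0)+h(0)+\int_0^1\bigl(h(t)-h(0)\bigr)\,\mathrm{d}t$ with $h(0)=\partial f(\mu;\nu-\mu)$. Put $\Psi(t):=\int_0^t\bigl(h(s)-h(0)\bigr)\,\mathrm{d}s$, so $\Psi\in C^1([0,1])$, $\Psi(0)=0$, $\Psi'=h-h(0)$, $\Psi'(0)=0$, and note that $\liminf_{r\to 0^+}\frac{\Psi'(t+r)-\Psi'(t)}{r}=\liminf_{r\to 0^+}\frac{h(t+r)-h(t)}{r}=\partial^2_- f(\eta_t;\nu-\mu)$ by the definition of $\partial^2_-$. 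The heart of the argument is the one‑dimensional claim that there exists $t^*\in[0,1]$ with $\Psi(1)\geq\tfrac12\,\partial^2_- f(\eta_{t^*};\nu-\mu)$. I would argue by contradiction: if $\partial^2_- f(\eta_t;\nu-\mu)>2\Psi(1)$ for all $t\in[0,1)$, then $v(t):=\Psi'(t)-2\Psi(1)t$ is continuous, $v(0)=0$, and $\underline{D}^+ v(t)>0$ on $[0,1)$; by the monotonicity fact (sharpened to strict monotonicity — constancy of $v$ on any subinterval would force $\underline{D}^+ v=0$ there) $v$ is strictly increasing, so $v>0$ on $(0,1]$; hence $q(t):=\Psi(t)-\Psi(1)t^2$ has $q\in C^1([0,1])$ with $q'=v$, so $q(1)-q(0)=\int_0^1 v(t)\,\mathrm{d}t>0$, and since $q(0)=0$ this gives $q(1)>0$, whereas $q(1)=\Psi(1)-\Psi(1)=0$ — a contradiction.

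Assembling the pieces, $f(\nu)=g(1)=f(\mu)+\partial f(\mu;\nu-\mu)+\Psi(1)\geq f(\mu)+\partial f(\mu;\nu-\mu)+\tfrac12\,\partial^2_- f(\eta_{t^*};\nu-\mu)$, and since $\eta_{t^*}=(1-t^*)\mu+t^*\nu$ this is precisely the claimed inequality.

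I expect the main obstacle to be the one‑sided nature of the derivatives: justifying that $g$ is $C^1$ (and not merely right‑differentiable) from the one‑directional G\^ateaux differentiability, which hinges on the Dini‑derivative monotonicity lemma, together with the Rolle/mean‑value‑type argument underlying the one‑dimensional claim. The reduction to $g$, the two uses of the fundamental theorem of calculus, and the final substitution are all routine bookkeeping.
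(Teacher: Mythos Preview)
Your proof is correct and follows essentially the same route as the paper: both reduce to the one-variable function $g(t)=f((1-t)\mu+t\nu)$, set $J=2(g(1)-g(0)-g'(0))$, and obtain $t^*$ with $g''_-(t^*)\le J$ by a contradiction argument using the Dini-derivative monotonicity fact (if the lower right Dini derivative is strictly positive then the function is strictly increasing). The only cosmetic differences are that the paper uses the classical Lagrange-remainder auxiliary $F(t)=g(t)+g'(t)(1-t)+\tfrac{(t-1)^2}{2}J-g(1)$ (with $F(0)=F(1)=0$) in place of your $q(t)=\Psi(t)-\Psi(1)t^2$, and that you spell out the $C^1$ regularity of $g$ and the monotonicity lemma while the paper simply asserts them.
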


\begin{proof}
The theorem is only a slight generalisation of Taylor's theorem.
Indeed, if there exists $t\in [0,1]$ such that $\partial^2_-f((1-t)\mu+t\nu;\nu-\mu) = - \infty$ then we have nothing to prove.
So we assume $\partial^2_-f((1-t)\mu+t\nu;\nu-\mu) > - \infty$ for all $t \in [0,1]$, define $g(t) = f((1-t)\mu+t\nu)$ then we can show that $g(1)=f(\nu)$, $g(0) = f(\mu)$, $g'(0)=\partial f(\mu;\nu-\mu)$ and $g''_-(t) = \partial_-^2 f((1-t)\mu + t \nu;\nu-\mu)$ where we define
\begin{equation} \label{eq:Prelim:GatDer:2ndLD}
g''_-(t) = \liminf_{r\to 0^+} \frac{g'(t+r)-g'(t)}{r}.
\end{equation}
Hence we can equivalently show that $g(1) \geq g(0) + g'(0) +
\frac{1}{2}g''_-(t^*)$ for some $t^*\in[0,1]$.
Define $J = 2(g(1)-g(0)-g'(0))$ and we are left to show $J\geq g_-''(t^*)$.

Let
\[ F(t) = g(t) + g'(t)(1-t) + \frac{(t-1)^2}{2} J - g(1) \]
and note that, by definition of $J$, $F(0) = F(1) = 0$.
Since $F_-'(t) = (1-t)(g_-''(t) - J)$ (where $F_-'$ is defined analogously to~\eqref{eq:Prelim:GatDer:2ndLD}), then if we can show there exists
$t^*\in (0,1)$ such that $F_-'(t^*)\leq 0$ we are done.
One can easily show that if $F_-'(t) > 0$ for all $t$ then $F$ is strictly
increasing, which contradicts $F(1)=F(0)$, and so there must exist such a $t^*$.
\end{proof}

\section{Weak Convergence \label{sec:Conv}}

To show weak convergence we apply Theorem~\ref{thm:Prelim:Gamma:Conmin}. 
The following two subsections prove that the conditions required to apply this theorem, i.e. that $f_\infty$ is the $\Gamma$-limit of $f_n$ and that the minimizers $\mu^n$ are uniformly bounded, hold with probability one.

For a fixed $\delta>0$ we define the set $\Theta$ to be the set of functions in $(H^s)^k$ which have minimum separation distance of $\delta$:
\begin{equation} \label{eq:Conv:Theta}
\Theta = \left\{ \mu\in (H^s)^k : | \mu_j(t) - \mu_l(t)| \geq \delta \; \forall t\in [0,1] \text{ and } j\neq l \right\}.
\end{equation}
For $d=1$ this is a strong assumption as we restrict ourselves to trajectories that do not intersect.
When considering the tracking of real objects in 2 or more dimensions, the assumption is typically physically reasonable.
For example if $\mu_j$ are to represent trajectories of extended objects by
modelling the location of the centroid, it is natural to require a minimum
separation of those centroids on a scale corresponding to the extent of the
objects in question. 

In practical implementations the constraint could be difficult to implement,
but it is straightforward to check whether it is satisfied post hoc.
For  a wide range of distributions on the data it is reasonable to expect that
any two cluster centers obtained by numerical procedures will not intersect and therefore have a minimum separation distance.
Of course, this separation distance is only known with posterior knowledge and not prior knowledge as we assume here.
We expect that one could improve this reasoning to state explicitly that with high probability any two cluster centers are at least $\delta^*$ apart for some $\delta^*$ that depends upon the distribution of the data.
We do not attempt to prove any such statement here.
Such a statement would imply that one could carry out the classification using a $k$-means method without directly imposing the constraint.

We use the assumption in order to infer that the spatial partitioning induced
by any set of cluster centers $\mu\in \Theta$ is such that every element of the partition is non-empty, at every time $t$, i.e. the sets
\[ X_j(t) = \left\{ x\in\mathbb{R}^d \, : \, |x-\mu_j(t)| < |x-\mu_i(t)| \, \text{for } i \neq j \right\} \]
for $j=1,\ldots,k$ are all non-empty.

First let us show that $\Theta$ is weakly closed in $(H^s)^k$.
Take any sequence $\mu^n\in\Theta$ such that $\mu^n\rightharpoonup \mu\in (H^s)^k$.
We have to show $\mu\in\Theta$.
Pick $t\in [0,1]$, $j\neq l$ and define $F:\Theta\rightarrow \mathbb{R}^d$ by $F:\nu \rightarrow \nu_j(t) - \nu_l(t)$, note that $F$ is in the dual space of $(H^s)^k$ (since $s\geq 1$).
Hence
\[ \delta \leq |\mu^n_j(t) - \mu^n_l(t)| = |F(\mu^n)| \to |F(\mu)| = |\mu_j(t) - \mu_l(t)|. \]
Therefore $\mu\in\Theta$.
Furthermore we can show that $f_n,f_\infty$ are weakly lower semi-continuous~\cite[Propositions 4.8 and 4.9]{thorpe15} hence they obtain their minimizers over weakly compact subsets of $\Theta$.
We will show that minimizers are contained in a bounded, and hence weakly compact set, and therefore there exists minimizers of $f_n$ and $f_\infty$ on $\Theta$.

We now state our assumptions.

\begin{assumptions} 
\begin{enumerate}
\item The data sequence $(t_i,y_i)$ is independent and identically distributed in accordance with the model ~\eqref{eq:Intro:model}, with $\mu^\dagger\in (L^\infty)^k$, $\varphi(i) \sim \mathrm{Cat}(p)$, $\epsilon_i \sim \phi_0$, $t_i\sim \phi_T$: $\varphi(i), \epsilon_i$ and $t_i$ are mutually independent, and $(\varphi(i), \epsilon_i, t_i)$, $(\varphi(j),\epsilon_j,t_j)$ are independent for $i\neq j$. 
We assume $\phi_0$ and $\phi_T$ are continuous densities with respect to the Lebesgue measure on $\mathbb{R}^d$ and $[0,1]$ respectively and use the same symbols to refer to these densities and to their associated measures.
\item The density $\phi_0$ is centered and has finite second moments.
\item For all $\epsilon\in\mathbb{R}^d$, $\phi_0(\epsilon)>0$.
\item There exists $\alpha<-d-3$ and $c_1$ such that $\sup_{t\in [0,1]} \phi_Y(y|t) \leq c_1 |y|^\alpha$. 
%
%
\end{enumerate}
\end{assumptions}

Observe that
\begin{align*}
f_n(\mu^\dagger) & = \frac{1}{n} \sum_{i=1}^n \bigwedge_{j=1}^k |\mu^\dagger_j(t_i) - y_i|^2  + \lambda \sum_{j=1}^k \|\nabla^s \mu_j^\dagger\|_{L^2}^2 \\
 & \leq \frac{1}{n} \sum_{i=1}^n |\mu^\dagger_{\varphi(i)}(t_i) - y_i|^2 + \lambda \sum_{j=1}^k \|\nabla^s \mu_j^\dagger\|_{L^2}^2 \\
 & = \frac{1}{n} \sum_{i=1}^n \epsilon_i^2 + \lambda \sum_{j=1}^k \|\nabla^s \mu_j^\dagger\|_{L^2}^2 \\
 & \to \text{Var}(\epsilon_i) + \lambda \sum_{j=1}^k \|\nabla^s \mu_j^\dagger\|_{L^2}^2 =: \alpha < \infty
\end{align*}
where the convergence is almost surely by the strong law of large numbers.
Hence Assumption~2 implies that there exists $N$ such that $\min_{\mu\in\Theta} f_n(\mu) < \alpha+1$ for $n\geq N$ and $N<\infty$ with probability one (although $N$ could depend on the sequence $\{t_i,y_i\}_{i=1}^n$ and so we could have $\sup_{\omega\in\Omega} N=\infty$). 

To simplify our proofs we use Assumption~3 although the results of this paper can be proved without it.
The assumption is used in bounding the minimizers of $f_n$.
Clearly if $\phi_0$ has bounded support then each $y_i$ is uniformly bounded (a.s.) and one can show that $|\mu^n(t)|$ is bounded uniformly in $n$ and $t$ (a.s.).
Assumption~3 can be relaxed at the expense of some trivial but notationally
messy modifications.

Assumption~4 is used the next section to uniformly control the decay in the density $\phi_Y$.
In particular the assumption allows us bound the error due to restricting to bounded sets.
Although Assumption~4 implies that $\phi_0$ has at least two moments we include the second moment condition in Assumption~2 as the decay in density is not needed until later sections.

Note the second moment condition implies that $\phi_0$ decays as $|\epsilon|\to \infty$ and therefore, by continuity, $\phi_0$ is bounded in $L^\infty$.

We now state the main result for this section.
The proof is an application of Theorem~\ref{thm:Prelim:Gamma:Conmin} once we have shown that $f_\infty$ is the $\Gamma$-limit (Theorem~\ref{thm:Conv:GammaConv}) and established the uniform bound on the set of minimizers Theorem~\ref{thm:Conv:Bound} (which by reflexivity of the space $(H^s)^k$ implies weak compactness).

\begin{theorem}
\label{thm:Conv:Conv}
Define $f_n,f_\infty:\Theta\to\mathbb{R}$ by~\eqref{eq:Intro:fn} and~\eqref{eq:Intro:finfty} respectively, where $\Theta\subset (H^s)^k$ for $s\geq 1$ is given by~\eqref{eq:Conv:Theta}.
Under Assumptions 1-3 any sequence of minimizers $\mu^n$ of $f_n$ is, with probability one, weakly compact and any weak limit $\mu^\infty$ is a minimizer of $f_\infty$.
\end{theorem}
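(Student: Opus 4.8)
The plan is to verify the hypotheses of the abstract convergence-of-minimizers result, Theorem~\ref{thm:Prelim:Gamma:Conmin}, with $\mathcal{H}=(H^s)^k$ and $\Theta$ as in~\eqref{eq:Conv:Theta}. Two facts are required, and I would establish each separately as a statement holding on an event of probability one: (a) that $f_\infty$ is the $\Gamma$-limit of $f_n$ with respect to weak convergence on $(H^s)^k$, which is Theorem~\ref{thm:Conv:GammaConv}; and (b) a bound on the minimizers that is uniform in $n$, namely an almost surely finite random radius $R$ with $\|\mu^n\|_{(H^s)^k}\leq R$ for all $n$, which is Theorem~\ref{thm:Conv:Bound}. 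I would then fix $\omega$ in the intersection of the two full-measure events and argue deterministically.

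For such an $\omega$ I would set $K:=\{\mu\in\Theta:\|\mu\|_{(H^s)^k}\leq R\}$ with $R=R(\omega)$ from Theorem~\ref{thm:Conv:Bound}. This set is bounded and, being the intersection of the weakly closed set $\Theta$ (shown weakly closed in the discussion preceding the Assumptions) with a norm-closed convex ball, weakly closed; since $(H^s)^k$ is a Hilbert space, hence reflexive, $K$ is weakly (sequentially) compact. Theorem~\ref{thm:Conv:Bound}, combined with the a priori competitor bound $\inf_\Theta f_n<\alpha+1$ for large $n$ recorded after the Assumptions, localises the minimization of $f_n$ to $K$, so that $\inf_\Theta f_n=\inf_K f_n$ for every $n$; a minimizer of $f_n$ exists in $K$ because $f_n$ is weakly lower semicontinuous (\cite[Propositions 4.8 and 4.9]{thorpe15}) and $K$ is weakly compact. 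One also checks that $f_\infty$ is not identically $\pm\infty$: the integrand and the regularisation term in~\eqref{eq:Intro:finfty} are nonnegative, so $f_\infty\geq 0$, while at a configuration of $k$ pairwise $\delta$-separated constant trajectories (which plainly exists) the value is finite, using $\mu^\dagger\in(L^\infty)^k$ from Assumption~1 and the finite second moment of $\phi_0$ from Assumption~2 to bound $\int_{\mathbb{R}^d}|y|^2\phi_Y(y|t)\,\mathrm{d}y$ uniformly in $t$. Applying Theorem~\ref{thm:Prelim:Gamma:Conmin} then gives $\min_\Theta f_\infty=\lim_n\inf_\Theta f_n$ and that every weak limit point of $(\mu^n)\subset K$ minimizes $f_\infty$; since $K$ is weakly sequentially compact, $(\mu^n)$ has weakly convergent subsequences, which is the asserted weak precompactness, and each subsequential limit is a minimizer of $f_\infty$. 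Removing the conditioning gives the statement with probability one.

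The real work lies in the two inputs rather than in this assembly, and the step I expect to be hardest is the uniform bound, Theorem~\ref{thm:Conv:Bound}. The regularisation term in~\eqref{eq:Intro:fn} controls only the $\mathcal{H}_1$ component of each $\mu^n_j$ (it vanishes on the finite-dimensional polynomial part $\mathcal{H}_0$), so a uniform bound on $\|\mu^n\|_{(H^s)^k}$ must come from the data-fidelity term $\frac1n\sum_i\bigwedge_j|y_i-\mu_j(t_i)|^2$, which has to be shown to grow, uniformly in $n$ and almost surely, when the polynomial part of a center is large. This is where the probabilistic content sits: one uses that the empirical measure $\frac1n\sum_i\delta_{(t_i,y_i)}$ concentrates, so the data do not escape to infinity; that $\phi_T$ spreads mass over $[0,1]$, so a nonconstant low-degree polynomial cannot be uniformly small at the sampled times; and Assumption~3 (positivity of $\phi_0$, hence of $\phi_Y(\,\cdot\mid t)$), which prevents a center from being parked far from all the data. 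By contrast the $\Gamma$-convergence itself should be comparatively routine: the recovery sequence can be taken constant, with $f_n(\mu)\to f_\infty(\mu)$ by the strong law of large numbers (the cost is integrable by the second-moment argument above), while the $\liminf$ inequality follows from weak lower semicontinuity of $\mu\mapsto\sum_j\|\nabla^s\mu_j\|_{L^2}^2$, the compact embedding $H^s\hookrightarrow C^0[0,1]$ for $s\geq1$ (so weak convergence in $(H^s)^k$ yields uniform convergence), the almost sure weak convergence of the empirical measures, and a truncation-and-monotone-convergence argument to pass from a bounded truncation of the cost to the full cost.
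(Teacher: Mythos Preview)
Your proposal is correct and follows essentially the same approach as the paper: the proof of Theorem~\ref{thm:Conv:Conv} is just an application of Theorem~\ref{thm:Prelim:Gamma:Conmin} once the $\Gamma$-limit (Theorem~\ref{thm:Conv:GammaConv}) and the uniform bound on minimizers (Theorem~\ref{thm:Conv:Bound}) are in hand, with reflexivity of $(H^s)^k$ yielding weak compactness of the bounded set. Your sketches of how the two inputs are proved---constant recovery sequence plus truncation for the $\Gamma$-limit, and the $\mathcal{H}_1$/$\mathcal{H}_0$ split with the data-fidelity term controlling the polynomial part for the bound---also match the paper's arguments.
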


\subsection{The \texorpdfstring{$\Gamma$}{Gamma}-Limit \label{subsec:Conv:Gamma}}

We claim the $\Gamma$-limit of $(f_n)$ is given by~\eqref{eq:Intro:finfty}.

\begin{theorem} \label{thm:Conv:GammaConv}
Define $f_n,f_\infty:\Theta\to\mathbb{R}$ by~\eqref{eq:Intro:fn} and~\eqref{eq:Intro:finfty} respectively where $\Theta\subset (H^s)^k$ for $s\geq 1$ is given by~\eqref{eq:Conv:Theta}.
Under Assumptions 1-2
\[ f_\infty = \Gamma\text{-}\lim_n f_n \]
for almost every sequence of observations $(t_1,y_1),(t_2,y_2),\dots$.
\end{theorem}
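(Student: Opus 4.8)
The plan is to split both functionals into a common, weakly lower semicontinuous regularisation term and a data-fidelity term, and to show that the data-fidelity terms converge continuously along weakly convergent sequences. Write $R(\mu) = \lambda\sum_{j=1}^k \|\nabla^s\mu_j\|_{L^2}^2$, $h_\mu(t,y) = \bigwedge_{j=1}^k |y-\mu_j(t)|^2$, and let $P$ denote the common law of the $(t_i,y_i)$ and $P_n = \frac1n\sum_{i=1}^n \delta_{(t_i,y_i)}$ the empirical measure, so that $f_n(\mu) = P_n h_\mu + R(\mu)$ and $f_\infty(\mu) = P h_\mu + R(\mu)$. Since $\mu\mapsto\nabla^s\mu$ is continuous from $H^s$ to $L^2$ for the weak topologies and $v\mapsto\|v\|_{L^2}^2$ is convex and strongly continuous, $R$ is weakly lower semicontinuous, and the constant sequence is trivially a recovery sequence for $R$. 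Consequently it suffices to show that, for almost every observation sequence, $g_n(\mu):=P_nh_\mu$ \emph{converges continuously} to $g_\infty(\mu):=Ph_\mu$, i.e.\ $\mu^n\rightharpoonup\mu$ in $(H^s)^k$ implies $g_n(\mu^n)\to g_\infty(\mu)$. Granting this, the lim inf inequality follows from $\liminf_n f_n(\mu^n) = \lim_n g_n(\mu^n) + \liminf_n R(\mu^n) \ge g_\infty(\mu) + R(\mu) = f_\infty(\mu)$, and $\mu^n\equiv\mu$ is a recovery sequence, since then $f_n(\mu)=g_n(\mu)+R(\mu)\to f_\infty(\mu)$.

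So fix $\mu^n\rightharpoonup\mu$ in $(H^s)^k$. By the uniform boundedness principle there is $R<\infty$ with $\sup_n\|\mu^n\|_{(H^s)^k}\le R$, so each $\mu^n$ lies in $\Theta_R:=\{\nu\in\Theta:\|\nu\|_{(H^s)^k}\le R\}$. Decompose $g_n(\mu^n) = (P_n-P)h_{\mu^n} + Ph_{\mu^n}$. For $s\ge 1$ the evaluation map $\nu\mapsto\nu_j(t)$ belongs to $(H^s)^*$ (as already used to show $\Theta$ is weakly closed), so $\mu^n_j(t)\to\mu_j(t)$ for every $t$ and hence $h_{\mu^n}\to h_\mu$ pointwise; moreover $h_{\mu^n}(t,y)\le |y-\mu^n_1(t)|^2\le 2|y|^2 + 2C_S^2 R^2$, which is $P$-integrable because, under Assumptions 1-2 together with $\mu^\dagger\in(L^\infty)^k$, the measure $P$ has finite second $y$-moment (here $C_S$ is the constant of the Sobolev embedding $H^s\hookrightarrow C[0,1]$). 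Dominated convergence then gives $Ph_{\mu^n}\to Ph_\mu$. It therefore remains to prove the uniform strong law of large numbers
\[ \sup_{\nu\in\Theta_R}\bigl|(P_n-P)h_\nu\bigr| \longrightarrow 0 \qquad\text{almost surely.} \]

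This uniform law of large numbers is the crux of the argument. Because $s\ge 1$, the embedding $H^s([0,1])\hookrightarrow C([0,1])$ is compact, so $\Theta_R$ is totally bounded in $(C([0,1]))^k$ under the supremum norm. A short computation using $|\min_j a_j - \min_j b_j|\le\max_j|a_j-b_j|$ gives, for $\nu,\nu'\in\Theta_R$, the estimate $|h_\nu(t,y) - h_{\nu'}(t,y)| \le \|\nu-\nu'\|_\infty\,(2|y| + 2C_S R)$. Fixing $\eta>0$ and covering $\Theta_R$ by finitely many supremum-norm balls $B(\nu^{(1)},\eta),\dots,B(\nu^{(M)},\eta)$, we obtain for $\nu$ in the $m$-th ball
\[ \bigl|(P_n-P)h_\nu\bigr| \le \bigl|(P_n-P)h_{\nu^{(m)}}\bigr| + \eta\,(P_n+P)\bigl(2|y| + 2C_S R\bigr). \]
Each $h_{\nu^{(m)}}$ is $P$-integrable, so the finitely many terms $(P_n-P)h_{\nu^{(m)}}$ vanish almost surely by the ordinary strong law of large numbers, and likewise $P_n(|y|)\to P(|y|)<\infty$ almost surely. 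Hence on a full-measure event $\limsup_n\sup_{\nu\in\Theta_R}|(P_n-P)h_\nu|\le \eta C'$ for a constant $C'$ depending only on $R$ and $P(|y|)$. Taking $\eta\downarrow 0$ along a countable sequence and intersecting the resulting events, then intersecting over $R\in\mathbb{N}$, yields the uniform strong law simultaneously for all $R$ on a single full-measure event $\Omega_0$. On $\Omega_0$ the continuous convergence $g_n\to g_\infty$ holds, and therefore $f_\infty=\Gamma\text{-}\lim_n f_n$. The overall strategy parallels the bounded-noise case treated in \cite{thorpe15}; the essential new point is the use of the moment conditions in Assumptions 1-2 to supply integrable envelopes where uniform bounds are no longer available.
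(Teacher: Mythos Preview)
Your argument is correct, but it follows a genuinely different route from the paper's. The paper treats the two $\Gamma$-convergence inequalities separately: for the liminf inequality it invokes a Fatou lemma for weakly converging measures \cite[Theorem~1.1]{feinberg14} applied to $P_n\Rightarrow P$, combined with a pointwise lower-semicontinuity bound on $g_{\mu^n}$ along jointly convergent $(t',y')\to(t,y)$; for the recovery sequence it takes $\mu^n\equiv\mu$ and shows $P_n g_\mu\to P g_\mu$ via a smooth truncation $\zeta_q g_\mu$ together with tail control built into the almost-sure event $\Omega'$. By contrast, you prove the stronger statement that the data-fidelity term converges \emph{continuously} along weakly convergent sequences, obtaining both $\Gamma$-inequalities at once. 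Your key tool is a uniform strong law over the class $\{h_\nu:\nu\in\Theta_R\}$, established via the compact Sobolev embedding $H^s\hookrightarrow C[0,1]$, the Lipschitz estimate $|h_\nu-h_{\nu'}|\le\|\nu-\nu'\|_\infty(2|y|+2C_SR)$, and a finite-cover argument. This is more self-contained---it avoids the external Fatou lemma and uses only the ordinary SLLN---and yields a sharper intermediate conclusion (continuous rather than merely $\Gamma$-convergence of the data term). The paper's approach, on the other hand, does not need the compact-embedding/covering machinery and keeps the liminf and limsup arguments modular, which may be advantageous if one later weakens the regularity assumptions so that total boundedness in $C[0,1]$ is no longer available.
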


\begin{proof}
We are required to show that the two inequalities in Definition~\ref{def:Prelim:Gamma:Gamcon} hold with probability 1.
In order to do this we follow~\cite{thorpe15} and consider a subset of $\Omega$ of full measure, $\Omega^\prime$, and show that both statements hold for every data sequence obtained from that set.

For clarity let $P(\text{d} (t,y)) = \phi_Y(\text{d}y|t) \phi_T(\text{d}t)$.
Let $P_n^{(\omega)}$ be the associated empirical measure arising from the
particular elementary event $\omega$, which we define via it's action on any
continuous bounded function $h:[0,1]\times\mathbb{R}^d\to \mathbb{R}$:  $P_n^{(\omega)} h =
\frac{1}{n} \sum_{i=1}^n h\left(t_i^{(\omega)},y_i^{(\omega)}\right)$ where
$\left(t_i^{(\omega)},y_i^{(\omega)}\right)$ emphasizes that these are the observations
associated with elementary event $\omega$. Define $g_\mu(t,y) = \bigwedge_{j=1}^k (y-\mu_j(t))^2$.
To highlight the dependence of $f_n$ on $\omega$ we write $f_n^{(\omega)}$.
We can write
\[ f_n^{(\omega)}(\mu) = P_n^{(\omega)} g_\mu + \lambda\sum_{j=1}^k\|\nabla^s\mu_j\|_{L^2}^2 \quad \quad \text{and} \quad \quad f_\infty = Pg_\mu + \lambda\sum_{j=1}^k\|\nabla^s\mu_j\|_{L^2}^2. \]
We define
\begin{align*}
\Omega^\prime & = \left\{ \omega\in \Omega : P_n^{(\omega)}\Rightarrow P \right\} \cap  \left\{ \omega\in \Omega : P_n^{(\omega)}(B(0,q)^c) \to P(B(0,q)^c) \; \forall q\in \mathbb{N} \right\} \\
 & \quad \quad \quad \quad \quad \cap \left\{ \omega\in \Omega : \int_{(B(0,q))^c}|y|^2 \; P_n^{(\omega)}(\text{d} (t,y)) \to \int_{(B(0,q))^c} |y|^2 \; P(\text{d}(t,y)) \; \forall q\in \mathbb{N} \right\}
\end{align*}
then $\mathbb{P}(\Omega^\prime)=1$ by the almost sure weak convergence of the empirical measure~\cite{dudley02} and the strong law of large numbers.

Fix $\omega\in \Omega^\prime$ and we start with the lim inf inequality.
Let $\mu^n\rightharpoonup \mu$.
By Theorem~1.1 in~\cite{feinberg14} we have
\[ \int_{[0,1]\times \mathbb{R}^d} \liminf_{n\to \infty, (t^\prime,y^\prime)\to (t,y)} g_{\mu^n}((t^\prime,y^\prime)) \; P(\text{d} (t,y)) \leq \liminf_{n\to \infty} \int_{[0,1]\times \mathbb{R}^d} g_{\mu^n}(t,y) \; P_n^{(\omega)}(\text{d} (t,y)). \]
By the same argument as in Proposition~4.8.ii in~\cite{thorpe15} we have
\[ \liminf_{n\to \infty, (t^\prime,y^\prime)\to (t,y)} \left(y^\prime - \mu^n_j(t^\prime)\right)^2 \geq \left(y - \mu_j(t)\right)^2. \]
Taking the minimum over $j$ we have
\[ \liminf_{n\to \infty, (t^\prime,y^\prime)\to (t,y)} g_{\mu^n}(t^\prime,y^\prime) \geq g_{\mu}(t,y). \]
And, as norms in Banach spaces are weak lower semi-continuous, $\liminf_{n\to \infty} \|\nabla^s\mu_j^n\|_{L^2}^2 \geq \|\nabla^s\mu_j\|_{L^2}^2$.
Therefore
\[ \liminf_{n\to \infty} f_n^{(\omega)}(\mu^n) \geq f_\infty(\mu) \]
as required.

We now establish the existence of a recovery sequence for every $\omega \in \Omega^\prime$ and every $\mu \in \Theta$.
Let $\mu^n=\mu\in \Theta$.
Let $\zeta_q$ be a $C^\infty(\mathbb{R}^{d+1})$ sequence of functions such that $0\leq \zeta_q(t,y) \leq 1$ for all $(t,y)\in \mathbb{R}^{d+1}$, $\zeta_q(t,y) = 1$ for $(t,y)\in B(0,q-1)$ and $\zeta_q(t,y) = 0$ for $(t,y)\not\in B(0,q)$.
Then the function $\zeta_q(t,y)g_\mu(t,y)$ is continuous for all $q$.
We also have, for any $(t,y) \in [0,1]\times\mathbb{R}^d$,
\begin{align*}
\zeta_q(t,y) g_\mu(t,y) & \leq \zeta_q(t,y) |y-\mu_1(t)|^2 \\
 & \leq 2 \zeta_q(t,y) \left(|y|^2 + |\mu_1(t)|^2\right) \\
 & \leq 2\zeta_q(t,y) \left( |y|^2 + \|\mu_1\|_{L^\infty([0,1])}^2 \right) \\
 & \leq 2|q|^2 + 2\|\mu_1\|^2_{L^\infty([0,1])} < \infty
\end{align*}
so $\zeta_q g_\mu$ is a continuous and bounded function, hence by the weak convergence of $P_n^{(\omega)}$ to $P$ we have
\[ P_n^{(\omega)} \zeta_q g_\mu \to P \zeta_q g_\mu \]
as $n\to \infty$ for all $q\in\mathbb{N}$.
For all $q\in\mathbb{N}$ we have
\begin{align*}
\limsup_{n\to \infty} |P_n^{(\omega)} g_\mu - Pg_\mu | & \leq \limsup_{n\to \infty} |P_n^{(\omega)} g_\mu - P_n^{(\omega)}\zeta_q g_\mu | + \limsup_{n\to \infty} |P_n^{(\omega)} \zeta_q g_\mu - P \zeta_q g_\mu | \\
 & \quad \quad \quad \quad \quad \quad + \limsup_{n\to \infty} |P \zeta_q g_\mu - Pg_\mu | \\
 & = \limsup_{n\to \infty} |P_n^{(\omega)} g_\mu - P_n^{(\omega)}\zeta_q g_\mu | + |P \zeta_q g_\mu - Pg_\mu |.
\end{align*}
Therefore,
\[ \limsup_{n\to \infty} |P_n^{(\omega)} g_\mu - Pg_\mu | \leq \limsup_{q\to \infty} \limsup_{n\to \infty} |P_n^{(\omega)} g_\mu - P_n^{(\omega)}\zeta_q g_\mu | \]
by the dominated convergence theorem.
We now show that the right hand side of the above expression is equal to zero.
We have
\begin{align*}
|P_n^{(\omega)} g_\mu - P_n^{(\omega)}\zeta_q g_\mu | & \leq P_n^{(\omega)} \mathbb{I}_{(B(0,q-1))^c} g_\mu \\
 & \leq \int_{[0,1]\times \mathbb{R}^d} \mathbb{I}_{(B(0,q-1))^c}(t,y) |y-\mu_1(t)|^2 \; P_n^{(\omega)}(\text{d}(t,y)) \\
 & \leq 2\int_{[0,1]\times \mathbb{R}^d} \mathbb{I}_{(B(0,q-1))^c}(t,y) |y|^2 \; P_n^{(\omega)}(\text{d}(t,y)) \\
 & \quad \quad \quad + 2\|\mu_1\|_{L^\infty([0,1])}^2 \int_{[0,1]\times \mathbb{R}^d} \mathbb{I}_{(B(0,q-1))^c}(t,y) \; P_n^{(\omega)}(\text{d}(t,y)) \\
 & \to 2\int_{[0,1]\times \mathbb{R}^d} \mathbb{I}_{(B(0,q-1))^c}(t,y) |y|^2 \; P(\text{d}(t,y)) \\
 & \quad \quad \quad + 2\|\mu_1\|_{L^\infty([0,1])}^2 \int_{[0,1]\times \mathbb{R}^d} \mathbb{I}_{(B(0,q-1))^c}(t,y) \; P(\text{d}(t,y)) \quad \text{as } n\to \infty \\
 & \to 0 \quad \text{as } q \to \infty
\end{align*}
where the last limit follows by the monotone convergence theorem and Assumption~2.
We have shown
\[ \lim_{n\to \infty} |P_n^{(\omega)} g_\mu - Pg_\mu | = 0. \]
Hence
\[ f_n^{(\omega)}(\mu) \to f_\infty(\mu) \]
as required.
\end{proof}

\subsection{Boundedness \label{subsec:Conv:Bound}}

The aim of this subsection is to show that the minimizers of $f_n$ are uniformly bounded in $n$ for almost every sequence of observations.
We divide this into two parts; bounding each of the $\mathcal{H}_0$ and $\mathcal{H}_1$ norms.
The $\mathcal{H}_1$ bound follows easily from the regularization.
For the $\mathcal{H}_0$ bound we exploit the equivalence of norms on finite-dimensional vector spaces to choose a convenient norm on  $\mathcal{H}_0$.

By the argument which followed the assumptions we have, for $n$ sufficiently large and with probability one, $\min_{\mu\in \Theta} f_n(\mu) \leq \alpha + 1 <\infty$.
Now we let $\mu^n$ be a sequence of minimizers.
Then there exists $\hat{\Omega}\subset\Omega$ such that $\mathbb{P}(\hat{\Omega})=1$ and for all $\omega\in \hat{\Omega}$ we have
\[ f_n(\mu^\dagger) = P_n^{(\omega)}g_{\mu^\dagger} + \lambda \sum_{j=1}^k \|\nabla^s\mu^\dagger_j\|_{L^2}^2 \to Pg_{\mu^\dagger} + \lambda \sum_{j=1}^k \|\nabla^s\mu^\dagger_j\|_{L^2}^2 =:\alpha. \]
Therefore for all $\omega\in\hat{\Omega}$ there exists $N=N(\omega)$ such that for $n\geq N$ we have
\[ \lambda\sum_{j=1}^k \|\mu^n_j\|_1^2 \leq f_n(\mu^n) \leq f_n(\mu^\dagger) \leq \alpha + 1. \]
Therefore $\|\mu^n_j\|_1$ is bounded almost surely for each $j$.
We are left to show the corresponding result for $\|\mu^n_j\|_0$.

The following lemma will be used to establish the main result of this subsection, Theorem~\ref{thm:Conv:Bound}.
It shows that, if for some sequence $\nu^n\in H^s$ with $\|\nabla^s \nu^n\|_{L^2} \leq \sqrt{\alpha}$ and $\|\nu^n\|_0\to \infty$, then we have that, up to a subsequence, $|\nu^n(t)|\to \infty$ with the exception of at most finitely many $t\in [0,1]$.
When applied to $\mu^n_j$ this will be used to show that in the limit, if any center is unbounded, then the minimization can be achieved over $k-1$ clusters --- and hence to provide a contradiction.

\begin{lemma}
\label{lem:Conv:Bound:UnifInfty}
Let $\nu \in H^s$ satisfy $\|\nabla^s\nu^n\|_{L^2} \leq \sqrt{\alpha}$ and $\|\nu^n\|_0\to \infty$.
Then there exists a subsequence such that, with the exception of at most finitely many $t\in [0,1]$, we have $|\nu^{n_m}(t)|\to \infty$.
Furthermore for each $t\in (0,1)$ with $|\nu^n(t)|\to \infty$ and any $t_n\to t$ we have $|\nu^n(t_n)|\to \infty$.
\end{lemma}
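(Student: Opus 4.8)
The plan is to exploit the multiscale decomposition $H^s=\mathcal{H}_0\oplus\mathcal{H}_1$. Write $\nu^n=\nu^n_0+\nu^n_1$ with $\nu^n_0\in\mathcal{H}_0$ and $\nu^n_1\in\mathcal{H}_1$, show that the fine-scale part $\nu^n_1$ stays bounded in $L^\infty$ uniformly in $n$ while the coarse (polynomial) part $\nu^n_0$ blows up, and then pass to a subsequence along which the \emph{normalised} polynomial part converges to a fixed, not identically zero, polynomial $P$. The exceptional set of the lemma will then be contained in the zero set of $P$, which is finite since a nonzero polynomial of degree at most $s-1$ has at most $s-1$ roots.

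Concretely, since $\nu^n_0$ is a polynomial of degree at most $s-1$ we have $\nabla^s\nu^n=\nabla^s\nu^n_1$, so $\|\nu^n_1\|_1=\|\nabla^s\nu^n\|_{L^2}\le\sqrt{\alpha}$, whereas $\|\nu^n\|_0=\|\nu^n_0\|_0\to\infty$ by hypothesis. Because $\nu^n_1\in\mathcal{H}_1$ satisfies $\nabla^i\nu^n_1(0)=0$ for $i=0,\dots,s-1$, Taylor's theorem with integral remainder gives $\nu^n_1(t)=\frac{1}{(s-1)!}\int_0^t(t-r)^{s-1}\nabla^s\nu^n_1(r)\,\mathrm{d}r$, and Cauchy--Schwarz then yields $\sup_{t\in[0,1]}|\nu^n_1(t)|\le C_s\|\nabla^s\nu^n_1\|_{L^2}\le C_s\sqrt{\alpha}$ for a constant $C_s$ depending only on $s$. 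Hence the fine scale cannot cancel the blow-up coming from the coarse scale; this is the only place the bound on $\|\nabla^s\nu^n\|_{L^2}$ enters.

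Next, write $\nu^n_0=\sum_{i=0}^{s-1}a^n_i\zeta_i$ and, using equivalence of norms on the finite-dimensional space $\mathcal{H}_0$, take $\|\nu^n_0\|_0=\max_{0\le i\le s-1}|a^n_i|=:M_n$, so $M_n\to\infty$. The normalised coefficients $b^n_i:=a^n_i/M_n$ lie on the unit sphere for the max-norm on $(\mathbb{R}^d)^s$, so along a subsequence $(n_m)$ we have $b^{n_m}_i\to b_i$ with $\max_i|b_i|=1$; set $P:=\sum_i b_i\zeta_i$, not identically zero, and $Z:=\{t\in[0,1]:P(t)=0\}$, which is finite because at least one scalar component of $P$ is a nonzero polynomial. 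Since the coefficients converge and the degree is fixed, $\tilde\nu^{n_m}_0:=\nu^{n_m}_0/M_{n_m}$ converges to $P$ uniformly on $[0,1]$. For $t\notin Z$ we get $|\tilde\nu^{n_m}_0(t)|\to|P(t)|>0$, hence $|\nu^{n_m}(t)|\ge M_{n_m}|\tilde\nu^{n_m}_0(t)|-|\nu^{n_m}_1(t)|\ge\tfrac12 M_{n_m}|P(t)|-C_s\sqrt{\alpha}\to\infty$ once $m$ is large; this is the first assertion, with exceptional set contained in $Z$. For the last assertion, take $t\in(0,1)$ with $P(t)\neq0$ (the set on which divergence has just been shown) and any $t_n\to t$; uniform convergence $\tilde\nu^{n_m}_0\to P$ together with continuity of $P$ gives $\tilde\nu^{n_m}_0(t_{n_m})\to P(t)\neq0$, and the same estimate gives $|\nu^{n_m}(t_{n_m})|\to\infty$.

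The argument is essentially routine; the point needing care is that the exceptional set must be independent of $n$, which is exactly why one passes to a subsequence making the normalised leading polynomial converge --- an $n$-by-$n$ argument would let the ``bad'' times (the roots of $\tilde\nu^n_0$) drift, whereas after extraction they are pinned near the finitely many roots of the limit $P$. One should also read the final clause as proved (and used) for $t$ in the good set $\{P\neq0\}$, which is precisely the set on which $|\nu^{n_m}(t)|\to\infty$ has been established.
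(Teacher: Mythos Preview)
Your proof is correct and follows essentially the same route as the paper: decompose $\nu^n$ into its Taylor polynomial $Q_n$ (your $\nu^n_0$) plus a remainder bounded in $L^\infty$ via the integral form of Taylor's theorem, normalise the polynomial part, extract a subsequence along which the normalised polynomial converges to a nonzero limit $P$ (your compactness of the unit sphere in $(\mathbb{R}^d)^s$ plays the same role as the paper's Arzel\`a--Ascoli), and take the finitely many roots of $P$ as the exceptional set. Your treatment of the second assertion is more direct (uniform convergence plus continuity of $P$) where the paper argues by contradiction, but the substance is identical.
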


\begin{proof}
Let the norm on $\mathcal{H}_0$ be given by
\begin{equation}\label{eq:h0norm}
 \|\nu \|_0 := \sum_{i=0}^{s-1} \frac{|\nabla^i \nu(0)|}{i!}. 
\end{equation}
By Taylor's theorem and the bound on $\|\nabla^s \nu^n\|_{L^2}$ we have
\[ \left| \nu^n(t) - \sum_{i=0}^{s-1} \frac{\nabla^i \nu^n(0)}{i!} t^i \right| \leq \sqrt{\alpha}. \]
Now let $Q_n(t) = \sum_{i=0}^{s-1} \frac{\nabla^i \nu^n(0)}{i!} t^i$ and $\hat{Q}_n(t) = \frac{Q_n(t)}{\|Q_n\|_0}$.
In particular $\|\hat{Q}_n\|_0=1$.
Take any subsequence $n_m$ then since $\frac{\mathrm{d}^i \hat{Q}_n}{\mathrm{d} t^i}$ are uniformly bounded equi-continuous for all $i=0,1,\dots, s-1$ so by the Arzel\`{a}-Ascoli theorem there exists a further subsequence (which we relabel) for which $\frac{\mathrm{d}^i\hat{Q}_n}{\mathrm{d} t^i}$ converges uniformly to $\frac{\mathrm{d}^i \hat{Q}}{\mathrm{d} t^i}$ for some $\hat{Q}$ and all $i=0,1,\dots s-1$.
In particular $\frac{\mathrm{d}^{s-1} \hat{Q}}{\mathrm{d} t^{s-1}}$ is a constant and therefore $\hat{Q}$ is a polynomial of degree at most $s-1$.
It follows that $\|\hat{Q}\|_0=1$ and therefore $\hat{Q}$ is not identically zero, hence $\hat{Q}$ has at most $s-1$ roots.
For any $t$ that is not a root of $\hat{Q}$ we have $|Q_{n_m}(t)| =
|\hat{Q}_{n_m}(t)| \|Q_{n_m}\|_0 \to \infty$.
This implies that $|\nu^n(t)|\to \infty$.


Now pick $t\in [0,1]$ with $|\nu^n(t)|\to \infty$ and assume $t_n\to t$.
We assume that there exists a subsequence $n_m$ such that $|Q_{n_m}(t_{n_m})|$ is bounded.
By going to a further subsequence (which we relabel) we assume that $\hat{Q}_{n_m} \to \hat{Q}$ uniformly.
Choose $\delta>0$ sufficiently small then there exists $\epsilon>0$ and $N<\infty$ such that for all $s$ with $|s-t|<\epsilon$ and $n_m\geq N$ then
\[ |\hat{Q}(s)|\geq \delta, \quad \quad \|\hat{Q}_{n_m}-\hat{Q}\|_{L^\infty} \leq \frac{\delta}{2} \quad \quad \text{and} \quad \quad |t_{n_m}-t|\leq \epsilon. \]
It follows that
\[ |\hat{Q}_{n_m}(t_{n_m})| \geq |\hat{Q}(t_{n_m})| - |\hat{Q}(t_{n_m}) - \hat{Q}_{n_m}(t_{n_m})| \geq \frac{\delta}{2}. \]
In particular $|Q_{n_m}(t_{n_m})| = \|Q_n\|_0 |\hat{Q}_{n_m}(t_{n_m})| \geq \frac{\delta \|Q_{n_m}\|_0}{2} \to \infty$.
This contradicts the assumption that $|Q_{n_m}(t_{n_m})|$ is bounded.
We have shown that $|\nu^n(t_n)|\to \infty$.
\end{proof}

We proceed to the main result of this subsection.

\begin{theorem}
\label{thm:Conv:Bound}
Define $f_n,f_\infty:\Theta\to\mathbb{R}$, where $\Theta\subset (H^s)^k$ for $s\geq 1$ is given by~\eqref{eq:Conv:Theta}, by~\eqref{eq:Intro:fn} and~\eqref{eq:Intro:finfty}
respectively.
Let $\mu^n$ be a minimizer of $f_n$ then, under Assumptions 1-3, for almost every sequence of observations there exists a constant $M < \infty$ such that $\|\mu^n\|_{(H^s)^k}\leq M$ for all $n$.  
\end{theorem}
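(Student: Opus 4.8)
The plan is to prove the bound by strong induction on the number of centres $k$, using Lemma~\ref{lem:Conv:Bound:UnifInfty} to analyse any centre whose $\mathcal{H}_0$-norm is not bounded; throughout I write $f_\infty^{[l]}$ and $\Theta^{[l]}$ for the $l$-centre analogues of $f_\infty$ and $\Theta$. As noted immediately before the statement, the regularisation already gives, almost surely and for $n$ large, $\lambda\sum_j\|\mu^n_j\|_1^2\le f_n(\mu^n)\le f_n(\mu^\dagger)\le\alpha+1$, so each $\|\mu^n_j\|_1$ is bounded and only $\|\mu^n_j\|_0$ remains to be controlled. I work on a full-measure event on which $P_n^{(\omega)}\Rightarrow P$ and the tail integrals of $|y|^2$ converge (the set $\Omega'$ from the proof of Theorem~\ref{thm:Conv:GammaConv}), on which $f_n^{(\omega)}(\mu)\to f_\infty(\mu)$ for every fixed $\mu\in\Theta$ (the recovery-sequence computation in that proof), and on which the inductive conclusions at levels below $k$ hold. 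Suppose, for contradiction, that $\|\mu^n\|_0$ is unbounded; passing to a subsequence, assume $\|\mu^n_j\|_0\to L_j\in[0,\infty]$ for every $j$, and put $J=\{j:L_j=\infty\}\ne\emptyset$. For $j\notin J$ the sequence $(\mu^n_j)$ is bounded in $H^s$, so along a further subsequence $\mu^n_j\rightharpoonup\mu^\infty_j$ in $H^s$ and, by the compact embedding $H^s\hookrightarrow C([0,1])$ ($s\ge1$), $\mu^n_j\to\mu^\infty_j$ uniformly; since $\mu^n\in\Theta$ and uniform limits preserve the separation constraint, $\mu^\infty_{J^c}\in\Theta^{[k-|J|]}$. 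For $j\in J$, Lemma~\ref{lem:Conv:Bound:UnifInfty} (applied with the bound on $\|\mu^n_j\|_1$ in place of $\sqrt\alpha$) gives, after further subsequences, $|\mu^n_j(t)|\to\infty$ for all but finitely many $t$, robustly along any $t_n\to t$ at such $t$.

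If $J=\{1,\dots,k\}$ the argument terminates at once: applying the lower-semicontinuity result of~\cite{feinberg14} exactly as in the proof of Theorem~\ref{thm:Conv:GammaConv},
\[ \liminf_n P_n^{(\omega)}g_{\mu^n}\ \ge\ \int \Big(\liminf_{n\to\infty,\ (t',y')\to(t,y)} g_{\mu^n}(t',y')\Big)\, P(\mathrm{d}(t,y))\ =\ +\infty \]
because for a.e.\ $(t,y)$ every term $|y'-\mu^n_j(t')|^2$ diverges, contradicting $P_n^{(\omega)}g_{\mu^n}\le f_n(\mu^n)\le\alpha+1$; this also disposes of the base case $k=1$. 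Assume henceforth $1\le|J|\le k-1$. The same inequality, now using that for a.e.\ $(t,y)$ the inner $\liminf$ equals $\bigwedge_{j\notin J}|y-\mu^\infty_j(t)|^2$ (the diverging terms being those with $j\in J$, the convergent ones those with $j\notin J$ via uniform convergence there), together with weak lower semicontinuity of $\|\nabla^s\cdot\|_{L^2}^2$ for the indices $j\notin J$ and discarding the nonnegative contributions from $j\in J$, yields
\[ \liminf_n f_n(\mu^n)\ \ge\ f_\infty^{[k-|J|]}\!\big(\mu^\infty_{J^c}\big)\ \ge\ \inf_{\Theta^{[k-|J|]}} f_\infty^{[k-|J|]}. \]

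For the matching upper bound, for every fixed $\mu^*\in\Theta$ one has $f_n(\mu^n)=\min_\Theta f_n\le f_n^{(\omega)}(\mu^*)\to f_\infty(\mu^*)$, hence $\limsup_n f_n(\mu^n)\le\inf_{\Theta^{[k]}} f_\infty^{[k]}$. The contradiction is then produced by the strict inequality
\[ \inf_{\Theta^{[k]}} f_\infty^{[k]}\ <\ \inf_{\Theta^{[k-1]}} f_\infty^{[k-1]}\ \le\ \inf_{\Theta^{[k-|J|]}} f_\infty^{[k-|J|]}, \]
whose second step is monotonicity of $l\mapsto\inf f_\infty^{[l]}$, itself a consequence of the first applied at levels below $k$. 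To prove the first step, I use the induction hypothesis (Theorem~\ref{thm:Conv:Bound} for $k-1$ centres) together with Theorems~\ref{thm:Conv:GammaConv} and~\ref{thm:Prelim:Gamma:Conmin} to conclude that $f_\infty^{[k-1]}$ attains its minimum at some $\hat\mu^*$ with $R:=\max_j\|\hat\mu^*_j\|_{L^\infty}<\infty$; then, for $c\in\mathbb{R}^d$ with $|c|=R+\delta$, the configuration $(\hat\mu^*,c)$ lies in $\Theta^{[k]}$, has the same regularisation as $\hat\mu^*$, and satisfies $|y-c|^2<\delta^2/4\le\bigwedge_{j\le k-1}|y-\hat\mu^*_j(t)|^2$ for all $(t,y)$ with $y\in B(c,\delta/2)$; as $\phi_Y(\cdot\mid t)>0$ everywhere (Assumption~3) and $\phi_T>0$ on a set of positive Lebesgue measure, that region carries positive $P$-mass, so $f_\infty^{[k]}(\hat\mu^*,c)<f_\infty^{[k-1]}(\hat\mu^*)=\inf f_\infty^{[k-1]}$. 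Combining the three displays gives $\inf f_\infty^{[k-|J|]}>\inf f_\infty^{[k]}\ge\limsup_n f_n(\mu^n)\ge\liminf_n f_n(\mu^n)\ge\inf f_\infty^{[k-|J|]}$, the desired contradiction. Hence $\|\mu^n_j\|_0$, and so $\|\mu^n_j\|_{H^s}$, is bounded almost surely for each $j$, which is the claim with $M$ the resulting bound.

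The step I expect to be the main obstacle is the strict drop $\inf f_\infty^{[k]}<\inf f_\infty^{[k-1]}$: this is exactly what rules out a centre escaping to infinity, and a clean proof of it appears to need both the induction on $k$ (so that the $(k-1)$-centre limiting problem is already known to be minimised at a bounded configuration one can perturb) and the strict positivity of $\phi_0$ from Assumption~3 (so that the inserted far-away centre actually removes a positive amount of distortion). The remaining ingredients --- the repeated subsequence extractions making each $\|\mu^n_j\|_0$ and each $\mu^n_j$ with $j\notin J$ convergent, the Feinberg lower-semicontinuity inequality borrowed from the proof of Theorem~\ref{thm:Conv:GammaConv}, and the compact Sobolev embedding --- are routine.
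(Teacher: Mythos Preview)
Your argument is correct, and it is a genuinely different organisation of the same ingredients. Both the paper and you bound $\|\mu^n_j\|_1$ from the regularisation, invoke Lemma~\ref{lem:Conv:Bound:UnifInfty} for any unbounded centre, apply the Feinberg--Fatou inequality for weakly converging measures, and use Assumption~3 to manufacture a strict gain from an extra constant centre. The difference is where the contradiction is staged. The paper works entirely at finite $n$: it replaces the diverging centre $\mu^n_{j^*}$ by a constant $c_n$, then compares $f_n(\mu^n)$, $f_n(\hat\mu^n)$ and $f_n((\mu^n_j)_{j\ne j^*})$ directly, showing both $\liminf_n\big(f_n(\mu^n)-f_n((\mu^n_j)_{j\ne j^*})\big)\ge 0$ and $\le -\epsilon$. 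Its induction is implicit and backward: boundedness of $c_n$ requires boundedness of the remaining centres, and if that fails one removes another centre and repeats. You instead pass to the limit immediately, handle all diverging indices $J$ at once via the compact embedding $H^s\hookrightarrow C([0,1])$ for the surviving centres, and reduce the contradiction to the clean inequality $\inf_{\Theta^{[k]}} f_\infty^{[k]}<\inf_{\Theta^{[k-|J|]}} f_\infty^{[k-|J|]}$ between limiting problems. This buys a tidier decomposition (pass to the limit, then compare limits), at the cost of needing the existence of a bounded minimiser of $f_\infty^{[k-1]}$, which you obtain via the induction hypothesis together with Theorems~\ref{thm:Conv:GammaConv} and~\ref{thm:Prelim:Gamma:Conmin}; the paper avoids this detour by never leaving the finite-$n$ problem.

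One minor simplification: the monotonicity $\inf f_\infty^{[k-1]}\le\inf f_\infty^{[k-|J|]}$ does not actually need the strict drop at lower levels. For any $\mu\in\Theta^{[l]}$ one can append a constant $c$ with $|c|>\|\mu\|_{(L^\infty)^l}+\delta$ to obtain $(\mu,c)\in\Theta^{[l+1]}$ with $f_\infty^{[l+1]}(\mu,c)\le f_\infty^{[l]}(\mu)$, since adding a term to a minimum never increases it and $\|\nabla^s c\|_{L^2}=0$; taking the infimum over $\mu$ gives $\inf f_\infty^{[l+1]}\le\inf f_\infty^{[l]}$ directly.
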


\begin{proof}
As in the proof of Theorem~\ref{thm:Conv:GammaConv} we let $\omega\in \Omega^{\prime\prime}$ where
\begin{align*}
\Omega^{\prime\prime} & = \left\{\omega\in\Omega^\prime: \frac{1}{n} \sum_{i=1}^n \epsilon_i^2 \to \text{Var}(\epsilon_1) \right\} \\
 & \quad \quad \quad \quad \bigcap \left( \cap_{c\in \mathbb{Q}^d} \left\{ \omega\in \Omega^\prime: P_n^{(\omega)}\left( B\left(c,\frac{\delta}{4}\right) \right) \rightarrow P\left( B\left(c,\frac{\delta}{4}\right)\right) \right\} \right)
\end{align*}
where $\Omega^\prime$ is defined in the proof of Theorem~\ref{thm:Conv:GammaConv}.
We have $\mathbb{P}(\Omega^{\prime\prime})=1$.
For the remainder of the proof we assume $\omega\in \Omega^{\prime\prime}$.
Then there exists $N^{(\omega)}<\infty$ such that $f_n^{(\omega)}(\mu^n)\leq \alpha + 1$ for all $n\geq N^{(\omega)}$.
Hence, for sufficiently large $n$,
\[ \lambda\sum_{j=1}^k \|\mu^n_j\|_1^2 \leq f_n^{(\omega)}(\mu^n)\leq \alpha +1. \]

It remains to show the $\mathcal{H}_0$ bound.
The structure of the proof is similar to~\cite[Lemma 2.1]{lember03}.
We will argue by contradiction.
In particular we argue that if a cluster center is unbounded then in the limit the minimum is achieved over the remaining $k-1$ cluster centers.

\paragraph*{Step 1:}
\textit{The minimization is achieved over $k-1$ cluster centers.}
We assume $\sup_j \|\mu_j^n\|_0$ is unbounded, then there exists $j^*$ and a subsequence (which we relabel) such that $\|\mu^n_{j^*}\|_0\to \infty$.
By Lemma~\ref{lem:Conv:Bound:UnifInfty} there exists a further subsequence (again relabelled) such that $|\mu^n_{j^*}(t)|\to \infty$ for all but finitely many $t$.
For any such $t$, by Lemma~\ref{lem:Conv:Bound:UnifInfty}, we have
\[ \lim_{n\to \infty,t'\to t} |\mu^n_{j^*}(t')| = \infty. \]
This easily implies
\[ \lim_{n\to \infty, (t^\prime,y^\prime)\to (t,y)} \left| \mu^n_{j^*}(t^\prime) - y^\prime\right|^2 = \infty \]
for any $y\in\mathbb{R}^d$.
Therefore
\[ \liminf_{n\to \infty, (t^\prime,y^\prime)\to (t,y)} \left( \bigwedge_{j=1}^k \left|\mu_j^n(t^\prime) - y^\prime\right|^2 - \bigwedge_{j\neq j^*} \left|\mu_j^n(t^\prime) - y^\prime \right|^2 \right) = 0. \]
Note that the above expression holds for $P$-almost every $(t,y)\in
[0,1]\times\mathbb{R}^d$ (as by Lemma \ref{lem:Conv:Bound:UnifInfty} the collection
  of $t$ for which $|\mu^n_{j^*}(t)|\not\to\infty$ has Lebesgue measure zero).
By Fatou's lemma for weakly converging measures~\cite[Theorem 1.1]{feinberg14} and the above we have
\[ \liminf_{n\to \infty} \left( \int_{[0,1]\times \mathbb{R}^d} \bigwedge_{j=1}^k |\mu_j^n(t)-y|^2 - \bigwedge_{j\neq j^*} |\mu_j^n(t)-y|^2 \; P_n^{(\omega)}(\text{d}t,\text{d}y) \right) \geq 0. \]
Hence 
\[ \liminf_{n\to \infty} \left( f_n^{(\omega)}(\mu^n) - f_n^{(\omega)}((\mu_j^n)_{j\neq j^*}) - \lambda \| \nabla^s \mu_{j^*}^n\|^2_{L^2} \right) \geq 0 \]
where we interpret $f_n^{(\omega)}((\mu_j^n)_{j\neq j^*})$ accordingly.
So,
\[   \liminf_{n\to \infty} \left( f_n^{(\omega)}(\mu^n) - f_n^{(\omega)}((\mu_j^n)_{j\neq j^*}) \right)
\geq 0 . \]

\paragraph*{Step 2:}
\textit{The contradiction.}
If we can show that there exists $\epsilon>0$ such that  
\[ \liminf_{n\to \infty} \left( f_n^{(\omega)}(\mu^n) - f_n^{(\omega)}((\mu_j^n)_{j\neq j^*}) \right) \leq -\epsilon. \]
(i.e. we can do strictly better by fitting $k$ centers than fitting $k-1$ centers) then we can conclude the theorem.

Now,
\[ f_n^{(\omega)}(\mu^n) \leq f_n^{(\omega)}(\hat{\mu}^n) = \frac{1}{n} \sum_{i=1}^n \bigwedge_{j=1}^k |\hat{\mu}^n_j(t_i) - y_i|^2 + \lambda \sum_{j\neq j^*} \|\nabla^s\hat{\mu}_j^n\|_{L^2}^2, \]
where
\[ \hat{\mu}_j^n(t) = \left\{ \begin{array}{ll} \mu^n_j(t) & \text{for } j\neq j^* \\ c_n & \text{for } j=j^* \end{array} \right. \]
for a constant $c_n$.
By definition, the $\hat{\mu}_j^n$ must have a minimum separation distance of $\delta$.
For now we assume that we can choose $c_n$ such that this criterion is fulfilled.
So if $|y_i-c_n|\leq \frac{\delta}{4}$ then
\[ |y_i-c_n| + \frac{\delta}{4} \leq |\mu_j^n(t_i)-y_i| \]
for all $j\neq j^*$.
And therefore $|y_i-c_n|^2 + \frac{\delta^2}{16} \leq |\mu_j^n(t_i)-y_i|^2$ which implies
\begin{align*}
f_n^{(\omega)}((\mu_j^n)_{j\neq j^*}) & = \frac{1}{n} \sum_{i=1}^n \bigwedge_{j\neq j^*} |\mu_j^n(t_i) - y_i|^2 + \lambda \sum_{j\neq j^*} \|\nabla^s \mu_j\|_{L^2}^2 \\
 & = \frac{1}{n} \sum_{i=1}^n \bigwedge_{j\neq j^*} |\mu_j^n(t_i) - y_i|^2 \mathbb{I}_{(t_i,y_i)\nsim_n j^*} + \frac{1}{n} \sum_{i=1}^n \bigwedge_{j\neq j^*} |\mu_j^n(t_i) - y_i|^2 \mathbb{I}_{(t_i,y_i)\sim_n j^*} \\
 & \quad \quad \quad + \lambda \sum_{j\neq j^*} \|\nabla^s \mu_j\|_{L^2}^2 \\
 & \geq \frac{1}{n} \sum_{i=1}^n \bigwedge_{j\neq j^*} |\mu_j^n(t_i) - y_i|^2 \mathbb{I}_{(t_i,y_i)\nsim_n j^*} + \frac{1}{n} \sum_{i=1}^n |c_n - y_i|^2 \mathbb{I}_{(t_i,y_i)\sim_n j^*} \\
 & \quad \quad \quad + \frac{\delta^2}{16} P_n^{(\omega)}\left( [0,1]\times B\left(c_n,\frac{\delta}{4}\right) \right) + \lambda \sum_{j\neq j^*} \|\nabla^s \mu_j\|_{L^2}^2 \\
 & = f_n^{(\omega)}(\hat{\mu}^n) + \frac{\delta^2}{16} P_n^{(\omega)}\left( [0,1]\times B\left(c_n,\frac{\delta}{4}\right) \right).
\end{align*}
Where $(t_i,y_i)\sim_n j$ means coordinate $(t_i,y_i)$ is associated to center $\hat{\mu}^n_j$ in the sense that $(t,y) \sim_n j \Leftrightarrow j = \argmin_{i=1,\ldots,k} |y - \hat{\mu}_i^n(t)|$ (and if the minimum is not uniquely achieved then we take the smallest $j$ such that $j\in \argmin_{i=1,\dots,k}|y-\hat{\mu}_i^n(t)|$).
If we can show that $P_n^{(\omega)}\left( [0,1]\times B\left(c_n,\frac{\delta}{4}\right) \right)$ is bounded away from zero, then the result follows.

Since we assumed $\epsilon_1$ has unbounded support on $\mathbb{R}^d$ if we can show
that $|c_n|\leq M$ for a constant $M$ and $n$ sufficiently large (a.s.) then we
can infer the existence of a subsequence such that
\[ \liminf_{n\to \infty} P_n^{(\omega)}\left( [0,1] \times B\left(c_n,\frac{\delta}{4}\right) \right) = \lim_{m\to \infty} P_{n_m}^{(\omega)}\left( [0,1]\times B\left(c_{n_m},\frac{\delta}{4}\right) \right) \]
and $c_{n_m}$ converges to some $c$.
This implies (after applying Fatou's lemma for weakly converging measures~\cite[Theorem 1.1]{feinberg14})
\begin{align*}
\liminf_{n\to \infty} P_n^{(\omega)}\left( [0,1]\times B\left(c_n,\frac{\delta}{4}\right) \right) &
\geq \lim_{m\to \infty} P_{n_m}^{(\omega)}\left( [0,1]\times B\left(c_{n_m},\frac{\delta}{4}\right)\right) \\
 & \geq P\left([0,1]\times B\left(c,\frac{\delta}{4}\right)\right) \\
 & = \int_0^1 \int_{\mathbb{R}^d} \mathbb{I}_{|y-c|\leq \frac{\delta}{4}} \phi_Y(y|t) \phi_T(t) \; \text{d} y \text{d} t.
\end{align*}
By Assumption 3 and the continuity in Assumption~1, 
there exists $\epsilon'>0$ such that $ \phi_Y(y|t)\geq \epsilon'$ for all $y\in [-M,M]^d$ and $t\in [0,1]$.
Hence we may bound the final expression above by
\[ \inf_{c\in [-M,M]} \int_0^1 \int_{\mathbb{R}^d} \mathbb{I}_{|y-c|\leq \frac{\delta}{4}} \phi_Y(y|t) \phi_T(t) \; \text{d} y \text{d} t \geq \epsilon' \mathrm{Vol}\left(B\left(0,\frac{\delta}{4}\right) \right). \]

We are left to show such an $M$ exists.
Assume there exists $M_{k-1}$ such that for all $j\neq j^*$ we have $\|\mu^n_j\|_{H^s} \leq M_{k-1}$.
By the Sobolev embedding of $H^s$ into $L^\infty$ there exists a constant $C'$ such that $\|\mu\|_{L^\infty} \leq C' \|\mu\|_{H^s}$ for all $\mu \in H^s$.
And therefore $|\mu^n_j(t)|\leq C'M_{k-1}$ for all $j\neq j^*$ and $t\in [0,1]$.
Let $C=C'M_{k-1}+\delta$ then it follows that there exists $c_n\in [0,C]^d$ such that $\hat{\mu}_{j^*}^n(t) = c_n$ and $\hat{\mu}^n\in \Theta$.

Now if no such $M_{k-1}$ exists then there exists a second cluster such that $\|\mu^n_{j^{**}}\|_{H^s} \to \infty$ where $j^{**}\neq j^*$.
By the same argument
\begin{align*}
\liminf_{n\to \infty} \left( f_n^{(\omega)}(\mu^n) - f_n^{(\omega)}((\mu_j^n)_{j\neq j^*,j^{**}}) \right) & \geq 0 \\
f_n^{(\omega)}(\mu^n) - f_n^{(\omega)}((\mu_j^n)_{j\neq j^*,j^{**}}) & \leq -\frac{\delta^2}{16} P_n^{(\omega)}
\left(B\left(c_n,\frac{\delta}{4}\right)\right) - \frac{\delta^2}{16} P_n^{(\omega)}
\left(B\left(c^\prime_n,\frac{\delta}{4}\right) \right)
\end{align*}
for a constant $c^\prime_n$.
By induction it is clear that we can find $M_{k-l}$ such that $k-l$ cluster centers are bounded.
The result then follows.
\end{proof}

\begin{remark}
\label{rem:Conv:Bound:choiceofk}
Note that in the above theorem we did not need to assume a correct choice of $k$.
If the true number of cluster centers is $k'$ and we incorrectly use $k\neq k'$, then the resulting cluster centers are still bounded.
In fact for all the results of this paper the correct choice of $k$ is not necessary: although the minimizers of $f_\infty$ may no longer make physical sense, the problem is still robust in that the conclusions of Theorems~\ref{thm:Conv:Conv} and~\ref{thm:Rate:StrongConv} and Corollary~\ref{cor:Rate:Rate} hold. 
\end{remark}

\section{Weak to Strong Convergence \label{sec:Rate}}

We now strengthen the result of the previous section and show that in fact (upto subsequences) convergence of minimizers is strong in $H^s$.
Our proof is based on the methodology Pollard used for proving the central limit theorem for the $k$-means method in Euclidean spaces~\cite{pollard82}.
In Pollard's proof he assumed a positive definiteness condition on the second derivative of, what we call in this paper, $f_\infty$.
Under an analogous condition we are also able to give a rate of convergence on convergent sequences of minimizers.
Whether this condition holds will depend on the interplay between the integral over the boundaries of each partition and the size of each partition.

We state the main results of this section now but leave the proofs to the end.

\begin{theorem}
\label{thm:Rate:StrongConv}
Define $f_n,f_\infty:\Theta\to\mathbb{R}$, where $\Theta$ is given by~\eqref{eq:Conv:Theta}, by~\eqref{eq:Intro:fn} and~\eqref{eq:Intro:finfty}, respectively.
Let $\{\mu^n\}_{n\in\mathbb{N}}\subset \Theta$ where $\mu^n$ minimizes $f_n$.
Let $\mu^{n_m}$ be any subsequence that weakly converges almost surely to some $\mu^{\infty}$ then under Assumptions~1-4 we have that, after passing to a further subsequence, $\mu^{n_m}$ converges to $\mu^\infty$ strongly in $H^s$ and in probability.
\end{theorem}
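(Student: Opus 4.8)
The plan is to reduce the claim to convergence of the Sobolev seminorms and then exploit the Hilbert space structure of $\mathcal{H}_1$. Recall the decomposition $H^s=\mathcal{H}_0\oplus\mathcal{H}_1$ with $\|\cdot\|_1=\|\nabla^s\cdot\|_{L^2}$ the (Hilbert) norm on $\mathcal{H}_1$, and that $\mathcal{H}_0$ is finite dimensional, so that on $\mathcal{H}_0$ weak and strong convergence coincide. Since coordinate projections $\mu\mapsto\mu_j$ and the projection onto $\mathcal{H}_1$ are bounded linear maps, $\mu^{n_m}\rightharpoonup\mu^\infty$ in $(H^s)^k$ passes to weak convergence of each $\mathcal{H}_1$-component. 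In a Hilbert space, weak convergence together with convergence of norms implies strong convergence, so it suffices to show $\|\nabla^s\mu^{n_m}_j\|_{L^2}\to\|\nabla^s\mu^\infty_j\|_{L^2}$ for each $j=1,\dots,k$; the $\mathcal{H}_0$-components then also converge strongly, and assembling the pieces gives $\mu^{n_m}\to\mu^\infty$ in $(H^s)^k$.

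To obtain convergence of the seminorms I would first argue that the full objective converges along the subsequence, $f_{n_m}^{(\omega)}(\mu^{n_m})\to f_\infty(\mu^\infty)$, for $\omega$ in a set of full measure. The lower bound $\liminf_m f_{n_m}^{(\omega)}(\mu^{n_m})\geq f_\infty(\mu^\infty)$ is the $\liminf$ inequality established in the proof of Theorem~\ref{thm:Conv:GammaConv} (valid since $\mu^{n_m}\rightharpoonup\mu^\infty$), while the matching upper bound follows from minimality together with the recovery sequences of that same theorem: $f_{n_m}^{(\omega)}(\mu^{n_m})=\inf_\Theta f_{n_m}^{(\omega)}\leq f_{n_m}^{(\omega)}(\mu)\to f_\infty(\mu)$ for every $\mu\in\Theta$, whence $\limsup_m f_{n_m}^{(\omega)}(\mu^{n_m})\leq\inf_\Theta f_\infty=f_\infty(\mu^\infty)$, using that $\mu^\infty$ minimises $f_\infty$ by Theorem~\ref{thm:Conv:Conv}. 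Writing $f_n^{(\omega)}(\mu)=P_n^{(\omega)}g_\mu+\lambda\sum_{j=1}^k\|\nabla^s\mu_j\|_{L^2}^2$, I would then split this limit into its two nonnegative contributions. The proof of Theorem~\ref{thm:Conv:GammaConv} already shows $\liminf_m P_{n_m}^{(\omega)}g_{\mu^{n_m}}\geq Pg_{\mu^\infty}$ (via the Fatou lemma for weakly converging measures and the identification of the pointwise $\liminf$ of $g_{\mu^{n_m}}$), and weak lower semicontinuity of the Hilbert norm gives $\liminf_m\sum_j\|\nabla^s\mu^{n_m}_j\|_{L^2}^2\geq\sum_j\|\nabla^s\mu^\infty_j\|_{L^2}^2$. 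Since both sequences are bounded (their sum converges) and their separate $\liminf$s add up to the limit of their sum, an elementary squeeze forces $P_{n_m}^{(\omega)}g_{\mu^{n_m}}\to Pg_{\mu^\infty}$ and $\sum_j\|\nabla^s\mu^{n_m}_j\|_{L^2}^2\to\sum_j\|\nabla^s\mu^\infty_j\|_{L^2}^2$.

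The last step is to pass from convergence of the sum of the squared seminorms to convergence of each term, using the small combinatorial fact that if $x^m_1,\dots,x^m_k\geq 0$ satisfy $\liminf_m x^m_j\geq x^\infty_j$ for every $j$ and $\sum_j x^m_j\to\sum_j x^\infty_j$, then $x^m_j\to x^\infty_j$ for every $j$ (write $\limsup_m x^m_j\leq\lim_m\sum_l x^m_l-\sum_{l\neq j}\liminf_m x^m_l\leq x^\infty_j$). Applying this with $x^m_j=\|\nabla^s\mu^{n_m}_j\|_{L^2}^2$ yields $\|\nabla^s\mu^{n_m}_j\|_{L^2}\to\|\nabla^s\mu^\infty_j\|_{L^2}$, which is what was needed. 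Carrying this out on the intersection of the full-measure events from Theorems~\ref{thm:Conv:GammaConv},~\ref{thm:Conv:Bound} and~\ref{thm:Conv:Conv} (so that $\Gamma$-convergence holds, the minimisers are uniformly bounded, and $\mu^\infty$ is a minimiser) gives strong convergence along $(n_m)$ for almost every $\omega$, which in particular yields the stated convergence in probability; indeed no further random subsequence is required.

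I expect the only real subtlety to be the bookkeeping in the splitting step: one must be careful that the limit of the objective genuinely decomposes into the limits of the fit term $P_n^{(\omega)}g_{\mu^n}$ and of the regularisation term, which relies precisely on having matching one-sided bounds ($\liminf\geq$) for each piece. That is where the only non-trivial analytic input enters, and all of it — the Fatou lemma for weakly converging measures and the lower semicontinuity estimates — is already available from Section~\ref{sec:Conv}. Assumptions~1--3 are what the argument actually uses (Assumption~4 is carried along because it is needed for the subsequent rate result, Corollary~\ref{cor:Rate:Rate}); everything else is soft functional analysis built on the $\Gamma$-convergence already proved.
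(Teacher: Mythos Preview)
Your argument is correct and takes a genuinely different route from the paper's. The paper proceeds by developing the differential calculus of $f_\infty$ and of the fluctuation process $Y_n=\sqrt{n}(f_n-f_\infty)$: it computes $\partial f_\infty$ (Lemma~\ref{lem:Rate:GatDerFinfty}), obtains the lower bound $\partial^2_- f_\infty(\mu;\nu)\geq 2\lambda\|\nabla^s\nu\|_{(L^2)^k}^2-2\|\nu\|_{(L^\infty)^k}^2$ (Lemma~\ref{lem:Rate:Gat2ndDerFinfty}), and controls $\partial Y_n$ in probability (Lemma~\ref{lem:Rate:d2Yn}); a second-order Taylor expansion around $\mu^\infty$ then yields
\[
2\lambda\|\nabla^s(\mu^n-\mu^\infty)\|_{(L^2)^k}^2\leq O_p\left(n^{-1/2}\|\mu^n-\mu^\infty\|_{(L^2)^k}+\|\mu^n-\mu^\infty\|_{(L^\infty)^k}^2\right),
\]
which tends to zero because weak $H^s$-convergence already forces convergence in $L^2$ and $L^\infty$ via compact embedding. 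All three lemmata invoke Assumption~4 on the tail decay of $\phi_Y$. Your approach, by contrast, is pure soft analysis on top of Section~\ref{sec:Conv}: it uses only the $\Gamma$-convergence, the separate weak lower semicontinuity of the data-fit and regularisation summands, and the Radon--Riesz property of the Hilbert space $\mathcal{H}_1$. It therefore needs only Assumptions~1--3, actually delivers almost sure strong convergence along the given subsequence (not merely in probability, and without extracting a further subsequence), and sidesteps the derivative computations entirely. What the paper's route buys is that the same machinery---the Taylor expansion of $f_\infty$ and the $O_p$ control of $\partial Y_n$---is precisely what drives the quantitative rate in Corollary~\ref{cor:Rate:Rate}; your argument establishes the qualitative statement more cleanly but gives no handle on the rate.
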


\begin{corollary}
\label{cor:Rate:Rate}
If in addition to the conditions in Theorem~\ref{thm:Rate:StrongConv} and where $\mu^\infty$ is a minimizer of $f_\infty$ we assume that there exists $\rho>0$ and $\kappa>0$ such that
\[ \partial^2_- f_\infty(\mu;\nu) \geq \kappa \|\nu\|_{(H^s)^k}^2 \]
for all $\mu$ with $\|\mu-\mu^\infty\|_{(H^s)^k}\leq \rho$.
Then any sequence $\mu^n$ of minimizers with $\mu^n\to \mu^\infty$ in $H^s$ obeys the rate of convergence
\[ \|\mu^n - \mu^\infty\|_{(H^s)^k}^2 = O_p\left(\frac{1}{n}\right). \] 
\end{corollary}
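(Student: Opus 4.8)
The plan is to adapt Pollard's rate argument for Euclidean $k$-means~\cite{pollard82} --- in modern language, the rate theorem for $M$-estimators --- combining a quadratic lower bound on the excess objective, furnished by the curvature hypothesis, with a matching empirical-process upper bound whose modulus of continuity is linear in $\|\mu-\mu^\infty\|_{(H^s)^k}$. Throughout, we work on the event $\{\|\mu^n-\mu^\infty\|_{(H^s)^k}\le\rho\}$, which has probability tending to one since $\mu^n\to\mu^\infty$ in $H^s$ in probability (Theorem~\ref{thm:Rate:StrongConv}).

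First I would record two sandwich inequalities. Applying Theorem~\ref{thm:Prelim:GatDer:Taylor} to $f_\infty$ along the segment from $\mu^\infty$ to $\mu^n$,
\begin{align*}
f_\infty(\mu^n)-f_\infty(\mu^\infty) &\ \ge\ \partial f_\infty\bigl(\mu^\infty;\mu^n-\mu^\infty\bigr) \\
&\qquad +\ \tfrac12\,\partial^2_- f_\infty\bigl((1-t^*)\mu^\infty+t^*\mu^n;\,\mu^n-\mu^\infty\bigr);
\end{align*}
the first term is nonnegative by stationarity of the minimizer $\mu^\infty$ (feasibility of the direction $\mu^n-\mu^\infty$ being dealt with as in the proof of Theorem~\ref{thm:Rate:StrongConv}), and, since the intermediate point is also in the $\rho$-ball, the second is at least $\tfrac{\kappa}{2}\|\mu^n-\mu^\infty\|_{(H^s)^k}^2$ by hypothesis. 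On the other hand $\mu^n$ minimizes $f_n$ over $\Theta\ni\mu^\infty$, so $f_n(\mu^n)\le f_n(\mu^\infty)$; since the regularization cancels in $f_n-f_\infty=(P_n-P)g_{(\cdot)}$ with $g_\mu(t,y)=\bigwedge_{j=1}^k|y-\mu_j(t)|^2$, this rearranges to
\[ f_\infty(\mu^n)-f_\infty(\mu^\infty)\ \le\ (P_n-P)\bigl(g_{\mu^\infty}-g_{\mu^n}\bigr). \]
Chaining the two, $\tfrac{\kappa}{2}\|\mu^n-\mu^\infty\|_{(H^s)^k}^2\le(P_n-P)(g_{\mu^\infty}-g_{\mu^n})$, and everything reduces to controlling this empirical process at the random argument $\mu^n$.

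For that I would first establish a Lipschitz envelope: from $|\bigwedge_j a_j-\bigwedge_j b_j|\le\max_j|a_j-b_j|$, the identity $|y-\mu_j(t)|^2-|y-\mu_j^\infty(t)|^2=(\mu^\infty_j(t)-\mu_j(t))\cdot(2y-\mu_j(t)-\mu^\infty_j(t))$, the Sobolev embedding $H^s\hookrightarrow L^\infty$ (valid for $s\ge1$), and the boundedness of $\{\mu^n\}$ from Theorem~\ref{thm:Conv:Bound}, one gets $|g_\mu(t,y)-g_{\mu^\infty}(t,y)|\le C(1+|y|)\|\mu-\mu^\infty\|_{(H^s)^k}$ on a fixed bounded subset of $(H^s)^k$. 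Hence the class $\mathcal{G}_\tau=\{g_{\mu^\infty}-g_\mu:\|\mu-\mu^\infty\|_{(H^s)^k}\le\tau\}$ has an $L^2(P)$-envelope of size $O(\tau)$, with the required better-than-square integrability of $y\mapsto1+|y|$ supplied by Assumption~4. Since bounded balls of $(H^s)^k$ are precompact in $(C([0,1]))^d$ with metric entropy $\log N(\epsilon)\lesssim\epsilon^{-1/s}$, and $s\ge1$ makes $\int_0^1\epsilon^{-1/(2s)}\,\mathrm{d}\epsilon$ finite, the uniform entropy integral of $\mathcal{G}_\tau$ is bounded; a standard maximal inequality --- after a truncation $|y|\le T_n$ whose tail is controlled by the polynomial decay of Assumption~4 --- then gives, for small $\tau$,
\[ \mathbb{E}\,\sup_{\mu:\ \|\mu-\mu^\infty\|_{(H^s)^k}\le\tau}\bigl|(P_n-P)(g_{\mu^\infty}-g_\mu)\bigr|\ \lesssim\ \frac{\tau}{\sqrt n}. \]
Feeding this linear modulus and the quadratic lower bound into the peeling argument of~\cite{pollard82} --- stratifying over shells $2^{j-1}r_n^{-1}\le\|\mu-\mu^\infty\|_{(H^s)^k}<2^jr_n^{-1}$ with $r_n=\sqrt n$ and summing the shell probabilities --- forces $\sqrt n\,\|\mu^n-\mu^\infty\|_{(H^s)^k}=O_p(1)$, i.e. the asserted $O_p(1/n)$ bound on the square.

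The main obstacle is the modulus estimate: one must control the complexity of the class $\{g_\mu\}$, which is indexed by the infinite-dimensional parameter $\mu$ --- this is exactly where $s\ge1$ enters, through convergence of the Sobolev-ball entropy integral --- while simultaneously preventing the unbounded observation noise from spoiling the $\sqrt n$ rate, which is the purpose of the polynomial tail bound in Assumption~4 (it endows the envelope with strictly more than two moments, enough to run the truncation cleanly). Secondary points are the continuous G\^ateaux differentiability of $f_\infty$ near $\mu^\infty$ needed to invoke Theorem~\ref{thm:Prelim:GatDer:Taylor}, and the nonnegativity of the first-order term, both following from the regularity of the model under Assumptions~1--4.
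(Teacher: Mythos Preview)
Your proposal is correct in spirit and would establish the result, but it takes a genuinely different route from the paper's proof. Both arguments begin the same way: the curvature hypothesis plus Theorem~\ref{thm:Prelim:GatDer:Taylor} give the quadratic lower bound $f_\infty(\mu^n)-f_\infty(\mu^\infty)\ge\tfrac{\kappa}{2}\|\mu^n-\mu^\infty\|_{(H^s)^k}^2$, and the minimizing property of $\mu^n$ gives the upper bound $f_\infty(\mu^n)-f_\infty(\mu^\infty)\le(P_n-P)(g_{\mu^\infty}-g_{\mu^n})$. The divergence is in how the right-hand side is controlled. You invoke the full empirical-process machinery---Lipschitz envelope, Sobolev-ball metric entropy, maximal inequality, truncation, and a peeling device---to obtain a \emph{uniform} modulus $\sup_{\|\mu-\mu^\infty\|\le\tau}|(P_n-P)(g_{\mu^\infty}-g_\mu)|=O_p(\tau/\sqrt n)$. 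The paper instead bypasses all of this: it linearizes $Y_n=\sqrt n(f_n-f_\infty)$ in the G\^ateaux sense, expands $\partial Y_n(\mu^\infty;\cdot)$ in a Fourier basis of $(L^2)^k$, and bounds each coefficient's second moment by a constant independent of the basis index (Lemma~\ref{lem:Rate:d2Yn}), yielding directly $\partial Y_n(\mu^\infty;\mu^n-\mu^\infty)=O_p(\|\mu^n-\mu^\infty\|_{(L^2)^k})$ via Markov's inequality---a \emph{pointwise} bound at the random argument $\mu^n$, with no uniformity and no peeling. The final step is then just division: $\kappa\|\mu^n-\mu^\infty\|_{(H^s)^k}^2\le O_p(n^{-1/2}\|\mu^n-\mu^\infty\|_{(L^2)^k})$ and one cancels one factor of $\|\mu^n-\mu^\infty\|$.

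What each approach buys: the paper's argument is lighter---no entropy calculations, no maximal inequalities, no shelling---and stays entirely within the toolkit already assembled for Theorem~\ref{thm:Rate:StrongConv}, so the corollary really is a corollary. Your approach is the textbook $M$-estimator rate proof and is more robust: it does not rely on the specific Hilbert structure that makes the Fourier-plus-Markov shortcut work, and it would transfer unchanged to, say, an $L^p$ loss or a non-Hilbertian parameter space, whereas the paper's Lemma~\ref{lem:Rate:d2Yn} argument leans on orthogonal expansions.
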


For clarity, we will assume that the entire sequence $\mu^n$ weakly converges in the remainder of this paper to avoid reference to subsequences.
Relaxing this assumption is trivial, but notationally cumbersome.

We let $Y_n(\mu)=\sqrt{n}(f_n(\mu)-f_\infty(\mu))$ and then, by Taylor expanding around $\mu^\infty$, we have
\[ Y_n(\mu^n) = Y_n(\mu^\infty) + \partial Y_n(\mu^\infty;\mu^n-\mu^\infty) + \text{h.o.t.} \]
In Lemma~\ref{lem:Rate:d2Yn}, using Chebyshev's inequality, we bound the G\^ateaux derivative of $Y_n$ in probability.
Similarly one can Taylor expand $f_\infty$ around $\mu^\infty$.
After some manipulation of the Taylor expansion, where we leave the details until the proof of Theorem~\ref{thm:Rate:StrongConv}, one has
\[ \partial^2_- f_\infty(\mu^\infty;\mu^n-\mu^\infty) \leq f_n(\mu^n)-f_n(\mu^\infty) + O_p\left(\frac{1}{\sqrt{n}} \|\mu^n-\mu^\infty\|_{(L^2)^k}\right). \]
We note that $f_n(\mu^n)-f_n(\mu^\infty)\leq 0$.
We also show that $2\lambda\|\nabla^s \nu\|_{(L^2)^k}^2 - 2\|\nu\|_{(L^\infty)^k}^2 \leq \partial^2_- f_\infty(\mu^\infty;\nu)$.
Therefore
\[ \lambda \|\nabla^s\left(\mu^n-\mu^\infty\right)\|_{(L^2)^k}^2 \leq  O_p\left(\frac{1}{\sqrt{n}} \|\mu^n-\mu^\infty\|_{(L^2)^k} + \|\mu^n-\mu^\infty\|_{(L^\infty)^k}^2\right). \]
The above expression allows us to convert weak convergence into strong convergence.
Lemmata \ref{lem:Rate:GatDerFinfty} and \ref{lem:Rate:Gat2ndDerFinfty} provide
the first G\^ateaux derivative and a lower bound on the second G\^ateaux
derivatives of $f_\infty$, respectively.

\begin{lemma} \label{lem:Rate:GatDerFinfty}
Define $f_\infty$ by~\eqref{eq:Intro:finfty} and $\Theta\subset (H^s)^k$ for $s\geq 1$ by~\eqref{eq:Conv:Theta}.
Then, under Assumptions~1, 2 and~4, for $\mu\in \Theta \cap (L^\infty)^k$, $\nu\in (H^s)^k$ we have that $f_\infty$ is G\^ateaux differentiable at $\mu$ in the direction $\nu$ with
\begin{align*}
\partial f_\infty(\mu;\nu) = & 2\int_0^1 \int_{\mathbb{R}^d} \left(\mu_{j(t,y)}(t) - y\right) \cdot \nu_{j(t,y)}(t) \phi_Y(y|t) \phi_T(t) \; \text{d} y \text{d} t \\
 & + 2 \lambda \sum_{j=1}^k (\nabla^s \nu_j,\nabla^s \mu_j)
\end{align*}
where $j(t,y)$ is chosen arbitrarily from the set $\argmin_j |y-\mu_j(t)|$, so that
\begin{equation} \label{eq:Rate:jty}
j(t,y) \in 
\argmin_j |y-\mu_j(t)|.
\end{equation}
\end{lemma}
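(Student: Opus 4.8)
The plan is to write $f_\infty = F + R$, where $R(\mu) = \lambda\sum_{j=1}^k\|\nabla^s\mu_j\|_{L^2}^2$ and $F(\mu) = \int_0^1\int_{\mathbb{R}^d} g_\mu(t,y)\,\phi_Y(y|t)\phi_T(t)\,\mathrm{d}y\,\mathrm{d}t$ with $g_\mu(t,y)=\bigwedge_{j=1}^k|y-\mu_j(t)|^2$, and to treat the two summands separately. The regularization term is a quadratic form: expanding $\|\nabla^s(\mu_j+r\nu_j)\|_{L^2}^2 = \|\nabla^s\mu_j\|_{L^2}^2 + 2r(\nabla^s\nu_j,\nabla^s\mu_j) + r^2\|\nabla^s\nu_j\|_{L^2}^2$ gives immediately $\partial R(\mu;\nu) = 2\lambda\sum_{j=1}^k(\nabla^s\nu_j,\nabla^s\mu_j)$. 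Since $g_\mu$ and $R$ are well defined for any argument in $(H^s)^k$, we may form the difference quotient for $\mu+r\nu$ even though this need not lie in $\Theta$; the constraint defining $\Theta$ will only be used at $\mu$ itself.

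For $F$ the strategy is to differentiate under the integral sign via dominated convergence. Set $h_r(t,y) = r^{-1}\big(g_{\mu+r\nu}(t,y)-g_\mu(t,y)\big)$ for $r>0$. First I would establish the pointwise limit. Fix $(t,y)$ for which $\argmin_j|y-\mu_j(t)|$ is a singleton $\{j^*\}$ and write $j^*=j(t,y)$. Since $|y-\mu_{j^*}(t)| < |y-\mu_j(t)|$ strictly for $j\neq j^*$ and $r\mapsto\mu_j(t)+r\nu_j(t)$ is continuous, for all sufficiently small $r>0$ the unique minimizer of $j\mapsto|y-\mu_j(t)-r\nu_j(t)|$ is still $j^*$; for such $r$, $g_{\mu+r\nu}(t,y) = |y-\mu_{j^*}(t)-r\nu_{j^*}(t)|^2$, and a direct expansion yields $h_r(t,y) = -2(y-\mu_{j^*}(t))\cdot\nu_{j^*}(t) + r|\nu_{j^*}(t)|^2 \to 2(\mu_{j^*}(t)-y)\cdot\nu_{j^*}(t)$, which is the claimed integrand.

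The key step is then to check that the set of $(t,y)$ at which $\argmin_j|y-\mu_j(t)|$ is not a singleton is $P$-null, so that the pointwise limit holds $P$-a.e. and, moreover, does not depend on the arbitrary selection of $j(t,y)$ on that null set. This set is contained in $\bigcup_{j\neq l}\{(t,y):|y-\mu_j(t)|=|y-\mu_l(t)|\}$; for each fixed $t$ the $t$-slice of $\{|y-\mu_j(t)|=|y-\mu_l(t)|\}$ is the perpendicular bisector of $\mu_j(t)$ and $\mu_l(t)$, which is a proper affine hyperplane in $\mathbb{R}^d$ because $\mu_j(t)\neq\mu_l(t)$ for $j\neq l$ (here the separation constraint of $\Theta$ enters), hence Lebesgue-null; Tonelli's theorem then gives that the whole set has $P$-measure zero. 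For the domination, using $|\bigwedge_j a_j - \bigwedge_j b_j|\leq\max_j|a_j-b_j|$ together with $\big||a-rb|^2-|a|^2\big|\leq 2r|a||b|+r^2|b|^2$, for $r\in(0,1]$ one gets $|h_r(t,y)| \leq \max_j\big(2|y-\mu_j(t)||\nu_j(t)|+|\nu_j(t)|^2\big) \leq 2M_\nu(|y|+M_\mu)+M_\nu^2 =: G(y)$, where $M_\mu = \|\mu\|_{(L^\infty)^k}$ and $M_\nu = \|\nu\|_{(L^\infty)^k}$, the latter finite by the Sobolev embedding $H^s\hookrightarrow L^\infty$ valid for $s\geq 1$. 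This $G$ is $P$-integrable: $\int_{\mathbb{R}^d}|y|\phi_Y(y|t)\,\mathrm{d}y = \sum_{j=1}^k p_j\int_{\mathbb{R}^d}|z+\mu_j^\dagger(t)|\phi_0(z)\,\mathrm{d}z \leq \int_{\mathbb{R}^d}|z|\phi_0(z)\,\mathrm{d}z + \|\mu^\dagger\|_{(L^\infty)^k}$, which is finite and uniform in $t$ by Assumption~2 (a finite second moment implies a finite first moment) and $\mu^\dagger\in(L^\infty)^k$, and $\phi_T$ is a probability density; Assumption~4 would also give this bound but is not needed here.

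Applying the dominated convergence theorem gives $\partial F(\mu;\nu) = \lim_{r\to 0^+}\int h_r\,\mathrm{d}P = 2\int_0^1\int_{\mathbb{R}^d}(\mu_{j(t,y)}(t)-y)\cdot\nu_{j(t,y)}(t)\,\phi_Y(y|t)\phi_T(t)\,\mathrm{d}y\,\mathrm{d}t$, and adding $\partial R(\mu;\nu)$ produces the stated formula; in particular $\partial f_\infty(\mu;\nu)$ exists and is linear in $\nu$. The only genuinely non-routine point is the identification of the $P$-null tie set and its discarding — this is where non-differentiability of $\bigwedge$ is handled and where the separation property of $\Theta$ and Tonelli's theorem are used; the pointwise computation, the quadratic expansion, and the domination estimate are all bookkeeping once the Sobolev embedding and Assumption~2 are in hand.
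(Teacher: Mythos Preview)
Your argument is correct and takes a genuinely different, more elementary route than the paper. The paper does not argue by pointwise convergence plus a single integrable dominant; instead it expands $g_{\mu+r\nu}$ via the perturbed selector $j_r(t,y)=\argmin_j|y-\mu_j(t)-r\nu_j(t)|$ and spends most of the effort on a quantitative geometric analysis of how the Voronoi-type boundaries move: it restricts to $|y|\leq r^{-\beta}$ (controlling the complement via Assumption~4), bounds the Hausdorff displacement of the bisecting hyperplanes by $O(r)$ inside this ball, and then slices the region $\{j_r\neq j\}$ into shells $X_m(r,t)$ to show $\frac{1}{r}\int \big(|y-\mu_{j_r}|^2-|y-\mu_j|^2\big)\phi_Y\,\mathrm{d}y = o(1)$ uniformly in $t$. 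Only after this does the paper invoke dominated convergence for the remaining pieces. Your approach sidesteps all of this by observing that the tie set is $P$-null (the paper records this separately as a remark) and using the Lipschitz property of $\min$ to get a dominant of the form $2M_\nu(|y|+M_\mu)+M_\nu^2$, whose integrability needs only Assumption~2, not Assumption~4. What your approach buys is simplicity and a strictly weaker hypothesis for this lemma; what the paper's approach buys is reusable machinery --- the same shell decomposition and tail estimates are recycled verbatim in the next lemma to lower-bound $\partial_-^2 f_\infty$, where a naive dominated-convergence argument on the second difference quotient would no longer suffice.
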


\begin{remark}
\label{rem:Rate:j}
Since $\mu_j$ are continuous the boundary between each element of the resulting partition is itself continuous and has Lebesgue measure zero.
The set on which $j(t,y)$ is not uniquely defined therefore has measure zero.
Hence we will treat $j(t,y)$ as though it was uniquely defined.
\end{remark}

\begin{proof}[Proof of Lemma~\ref{lem:Rate:GatDerFinfty}:]
Fix $\mu\in\Theta$, $\nu\in (H^s)^k$ and $r>0$.
We will assume $d\geq 2$.
The case when $d=1$ simplifies as the boundaries between partitions are points and so we exclude the argument.
Let $\beta=-\frac{1+\epsilon}{\alpha+2+d}$ where $\epsilon>0$ is chosen sufficiently small so that $1-\beta = \frac{\alpha+3+d+\epsilon}{\alpha+2+d}>0$ (true for any $\epsilon< -(\alpha + d + 3)$).
Then
\begin{align}
\frac{1}{r} \int_{|y|\geq r^{-\beta}} |y|^2 \phi_Y(y|t) \, \mathrm{d} y & \leq \frac{c_1}{r} \int_{|y|\geq r^{-\beta}} |y|^{2+\alpha} \, \mathrm{d} y \notag \\
 & = \frac{c}{r} \int_{r^{-\beta}}^\infty t^{2+\alpha+d-1} \, \mathrm{d} t \quad \text{for some } c>0 \notag \\
 & = -\frac{c}{\alpha+2+d} r^{-\beta(\alpha+2+d)-1}. \label{eq:Rate:ResToBall}
\end{align}
Since $\alpha+2+d < 0$ and $-\beta(\alpha+2+d)-1=\epsilon>0$ the above converges to zero as $r\to 0$.
Analogously, one can show $\frac{1}{r} \int_{|y|\geq r^{-\beta}} \, \phi(y|t) \, \mathrm{d} y \to 0$ as $r\to 0$.

Define $j_r(t,y)$ by
\[ j_r(t,y) = \argmin_j |y-\mu_j(t)-r \nu_j(t)|. \]
Then for $(t,y)$ in the interior of the partition associated with $\mu_j$ we have 
\[ j_r(t,y) = j(t,y) \quad \quad \text{for } r \text{ sufficiently small.} \]
More precisely consider two points $y_1,y_2 \in \mathbb{R}^d$, with $|y_1-y_2|\geq \delta$ and let $B_{y_1,y_2}$ be the boundary defined by
\[ B_{y_1,y_2} = \left\{ y \in \overline{B(0,M)} \, : \, |y-y_1| = |y-y_2| \right\} \]
for a constant $M>0$.
Let $\tilde{y}_1\in B(y_1,Cr)$ and $\tilde{y}_2 \in B(y_2,Cr)$.
We will denote by $d_H$ the Hausdorff distance between sets in $\mathbb{R}^d$, in particular we wish to bound $d_H(B_{y_1,y_2},B_{\tilde{y}_1,\tilde{y}_2})$.
Elementary geometry implies that this can be bounded by the Euclidean distance between points on the boundary of each set, in particular
\[ d_H(B_{y_1,y_2},B_{\tilde{y}_1,\tilde{y}_2}) \leq d_H(\partial B_{y_1,y_2},\partial B_{\tilde{y}_1,\tilde{y}_2}) \]
where
\[ \partial B_{y_1,y_2} = \left\{ y \in \mathbb{R}^d \, : \, |y|=M \text{ and } |y-y_1| = |y-y_2| \right\}. \]
Without loss of generality assume that $B_{y_1,y_2} \subset \{ x \, : \, x_1=0\}$.
(All assumptions other than 4 are rotation and translation invariant, whilst 4 is rotation invariant it is not translation invariant as the constant $c_1$ could increase with the size of the translation.
However the cluster centers are bounded in $L^\infty$, so in particular the size of the translation can be bounded.
Therefore, up to redefining the constant $c_1$, all the assumptions hold in the rotated and translated coordinate system.
For $d\geq 3$ we consider a cross section at $x_{3:d}=a \in \mathbb{R}^{d-2}$, then there exists constants $\gamma_1,\gamma_2 \in \mathbb{R}$ (depending on $a$) such that $x_1=\gamma_1 x_2 + \gamma_2$ parametrizes the set $\{ x \in B_{\tilde{y}_1,\tilde{y}_2} \, : \, x_{3:d} = a\}$ (for $a>M$ the set is empty and we have nothing to prove).
Let $\theta_a = |\tan^{-1}\gamma_1 | \in [0, \frac{\pi}{2}]$ be the angle between the lines $x_1=0$ and $x_1=\gamma_1 x_2 + \gamma_2$.
When $d=2$ the set $B_{\tilde{y}_1,\tilde{y}_2}$ is already a straight line in $\mathbb{R}^2$ and it is unnecessary to take a cross section (i.e. $x_{3:d}$ is null and $\theta_a$ is independent of $a$).
We will find $\theta^*$ such that $\sin\theta^*=O(r)$ and $\sup_a \theta_a \leq \theta^*$ then we can  bound the Hausdorff distance by
\[ d_H(\partial B_{y_1,y_2},\partial B_{\tilde{y}_1,\tilde{y}_2}) \leq rC + 2M \sin \theta^* = O(r), \]
the above bound holding as it is the maximum distance that can arise from rotation plus the maximum 
possible translation of the set $\partial B_{y_1,y_2}$.

Let $\ell$ be the ray through $y_1$ and $y_2$ and $\tilde{\ell}$ be the ray through $\tilde{y}_1$ and $\tilde{y}_2$.
Let $P$ be the point of intersection between $\ell$ and $\tilde{\ell}$.
The point $P$ exists if and only if the lines $\ell$ and $\tilde{\ell}$ are not parallel.
The lines $\ell$ and $\tilde{\ell}$ are parallel if and only if $\theta=0$, trivially any choice of $\theta^*\geq 0$ will bound this case.
Therefore we assume that $\theta>0$ and therefore the point $P$ exists.
 
One can easily show that $\widehat{\tilde{y}_1 P y_1} = \theta$ (the angle between the lines $\tilde{y}_1P$ and $P y_1$ is $\theta$).
There are two possibilities, either (1) $P$ is between $y_1$ and $y_2$ or (2) it isn't.

In the second case we assume that $|y_2-P|< |y_1-P|$ and therefore $|y_1-P|> \delta$.
Let $Q$ be the closest point on $\tilde{\ell}$ to $y_1$ (see Figure~\ref{fig:Rate:2DGeom}).
So, $P,y_1,Q$ form a triangle with $\widehat{PQy_1}=\frac{\pi}{2}$, $\widehat{QPy_1} = \theta$ and $|Q-y_1| \leq |y_1-\tilde{y}_1| \leq Cr$.
Hence $\sin\theta = \frac{|Q-y_1|}{|y_1-P|} \leq \frac{Cr}{\delta}$.

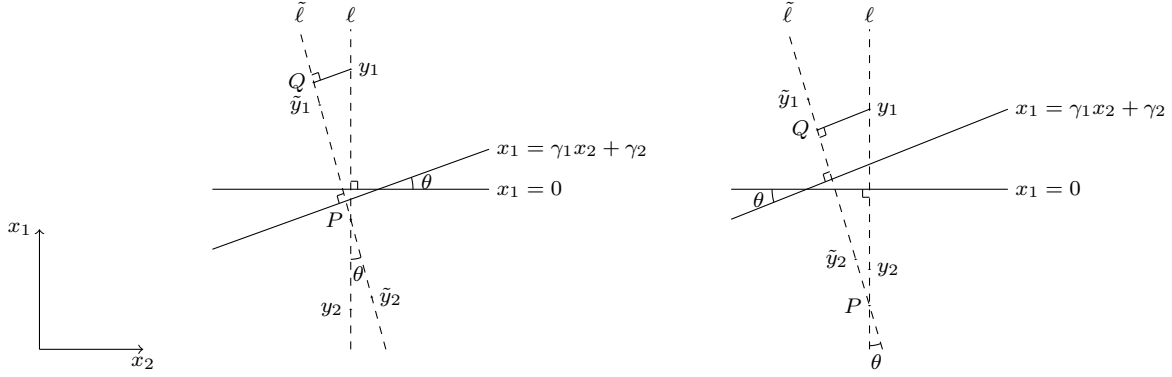
\begin{figure}
\centering
\setlength\figureheight{\textwidth/3}
\setlength\figurewidth{\textwidth}
\scriptsize
\begin{tikzpicture}
\begin{axis}[
width=\figurewidth,
height=\figureheight,
scale only axis,
ticks=none,
xmin=-5,
xmax=30,
ymin=-5,
ymax=5,
hide axis,
]
\draw [->] (axis cs: -4,-4) -- (axis cs: -1,-4) node[below] {$x_2$};
\draw [->] (axis cs: -4,-4) -- (axis cs: -4,-1) node[left] {$x_1$};

\draw (axis cs: 1,0) -- (axis cs: 9,0) node[right] {$x_1 = 0$};
\draw[dashed] (axis cs: 5,-4) -- (axis cs: 5,4) node[above] {$\ell$};
\draw (axis cs: 5.2,0.2) -- (axis cs: 5,0.2);
\draw (axis cs: 5.2,0) -- (axis cs: 5.2,0.2);
\draw (axis cs: 1,-1.5) -- (axis cs: 9,1) node[right] {$x_1 = \gamma_1 x_2 + \gamma_2$};
\draw[dashed] (axis cs: 6.0156,-4) -- (axis cs: 3.5156,4) node[above] {$\tilde{\ell}$}; 
\draw (axis cs: 4.6668,-0.3542) -- (axis cs: 4.6071,-0.1633);
\draw (axis cs: 4.6071,-0.1633) -- (axis cs: 4.7980,-0.1036);
\filldraw (axis cs: 5,-3) circle [radius=0.1] node[left] {$y_2$};
\filldraw (axis cs: 5,3) circle [radius=0.1] node[right] {$y_1$};
\filldraw (axis cs: 5.6077,-2.6945) circle [radius=0.1] node[right] {$\tilde{y}_2$};
\filldraw (axis cs: 4.1077,2.1055) circle [radius=0.1] node[left] {$\tilde{y}_1$};
\filldraw (axis cs: 5,-0.75) circle [radius=0.1] node[left] {$P$};
\addplot [
color=black,
solid,
forget plot
]
table[row sep=crcr]{
5.000 -1.750 \\
5.034 -1.749 \\
5.067 -1.748 \\
5.101 -1.745 \\
5.134 -1.741 \\
5.167 -1.736 \\
5.201 -1.730 \\
5.233 -1.722 \\
5.266 -1.714 \\
5.298 -1.705 \\
};
\draw (axis cs: 5.225,-2.1) node {$\theta$};
\filldraw (axis cs: 3.9324,2.6664) circle [radius=0.1] node[left] {$Q$};
\draw (axis cs: 3.9324,2.6664) -- (axis cs: 5,3);
\draw (axis cs: 4.1233,2.7261) -- (axis cs: 4.0636,2.9170);
\draw (axis cs: 4.0636,2.9170) -- (axis cs: 3.8727,2.8573);
\addplot [
color=black,
solid,
forget plot
]
table[row sep=crcr]{
6.800 0.000 \\
6.799 0.034 \\
6.798 0.067 \\
6.795 0.101 \\
6.791 0.134 \\
6.786 0.167 \\
6.780 0.201 \\
6.772 0.233 \\
6.764 0.266 \\
6.755 0.298 \\
};
\draw (axis cs: 7.2,0.2) node {$\theta$};

\draw (axis cs: 16,0) -- (axis cs: 24,0) node[right] {$x_1 = 0$};
\draw[dashed] (axis cs: 20,-4) -- (axis cs: 20,4) node[above] {$\ell$};
\draw (axis cs: 19.8,-0.2) -- (axis cs: 20,-0.2);
\draw (axis cs: 19.8,0) -- (axis cs: 19.8,-0.2);
\draw (axis cs: 16,-0.75) -- (axis cs: 24,2) node[right] {$x_1 = \gamma_1 x_2 + \gamma_2$};
\draw[dashed] (axis cs: 20.375,-4) -- (axis cs: 17.625,4) node[above] {$\tilde{\ell}$}; 
\draw (axis cs: 18.7244,0.1865) -- (axis cs: 18.6594,0.3756);
\draw (axis cs: 18.6594,0.3756) -- (axis cs: 18.8485,0.4406);
\filldraw (axis cs: 20,-2) circle [radius=0.1] node[right] {$y_2$};
\filldraw (axis cs: 20,2) circle [radius=0.1] node[right] {$y_1$};
\filldraw (axis cs: 19.6010,-1.7485) circle [radius=0.1] node[left] {$\tilde{y}_2$};
\filldraw (axis cs: 18.2260,2.2515) circle [radius=0.1] node[left] {$\tilde{y}_1$};
\filldraw (axis cs: 20,-2.9093) circle [radius=0.1] node[left] {$P$};
\addplot [
color=black,
solid,
forget plot
]
table[row sep=crcr]{
20.000 -3.909 \\
20.037 -3.909 \\
20.074 -3.907 \\
20.110 -3.903 \\
20.147 -3.898 \\
20.183 -3.892 \\
20.219 -3.885 \\
20.255 -3.876 \\
20.290 -3.866 \\
20.325 -3.855 \\
};
\draw (axis cs: 20.2,-4.3) node {$\theta$};
\filldraw (axis cs: 18.4908,1.4812) circle [radius=0.1] node[left] {$Q$};
\draw (axis cs: 18.4908,1.4812) -- (axis cs: 20,2);
\draw (axis cs: 18.6799,1.5462) -- (axis cs: 18.7449,1.3571);
\draw (axis cs:  18.7449,1.3571) -- (axis cs: 18.5558,1.2921);
\addplot [
color=black,
solid,
forget plot
]
table[row sep=crcr]{
17.182 0.000 \\
17.182 -0.037 \\
17.185 -0.074 \\
17.188 -0.110 \\
17.193 -0.147 \\
17.199 -0.183 \\
17.206 -0.219 \\
17.215 -0.255 \\
17.225 -0.290 \\
17.236 -0.325 \\
};
\draw (axis cs: 16.8,-0.25) node {$\theta$};
\end{axis}
\end{tikzpicture}
\caption{
The geometry considered in the proof of Lemma~\ref{lem:Rate:GatDerFinfty} admits two cases: in the first (left) the intersection of $l$ and $\tilde{l}$ lies between $y_1$ and $y_2$; in the second (right) it does not.
}
\label{fig:Rate:2DGeom}
\end{figure}

The first case is similar.
Assume that $|y_1-P| \geq |y_2-P|$ then $|y_1-P|\geq \frac{\delta}{2}$.
Let $Q$ be the closest point on $\tilde{\ell}$ to $y_1$ then $|Q-y_1|\leq |y_1 -\tilde{y}_1|\leq Cr$ and $\widehat{QPy_1} = \theta$, $\widehat{y_1 QP}=\frac{\pi}{2}$.
In particular $\sin\theta = \frac{|Q-y_1|}{|y_1-P|} \leq \frac{2Cr}{\delta}$.

In both cases $\sin\theta \leq \frac{2Cr}{\delta}$ which implies
\[ d_H(B_{y_1,y_2},B_{\tilde{y}_1,\tilde{y}_2}) \leq d_H(\partial B_{y_1,y_2},\partial B_{\tilde{y}_1,\tilde{y}_2}) \leq rC + \frac{4MCr}{\delta}. \]

Let
\[ B(t) = \left\{ y \in\mathbb{R}^d \, : \, j(t,y) \text{ is not uniquely defined} \right\} \]
and $X(r,t) = \left\{y \in B(0,r^{-\beta})\, :\, \text{dist}(y,B(t))\leq \|\nu\|_{(L^\infty)^k} \left(r + \frac{4r^{1-\beta}}{\delta}\right)\right\}$.
By the previous calculation with $C = \|\nu\|_{(L^\infty)^k}$ and $M = r^{-\beta}$, if $j_r(t,y)\neq j(t,y)$ then $\mathrm{dist}(y,B(t)) \leq rC+\frac{4Mr}{\delta} = \|\nu\|_{(L^\infty)^k} \left(r + \frac{4r^{1-\beta}}{\delta}\right)$.
And therefore if $y\not\in X(r,t)$ then $j_r(t,y) = j(t,y)$.

We now partition $X(r,t)$ into $\lceil 2r^{-\beta-1}\rceil$ subsets (where $\lceil t\rceil$ is the smallest integer greater than or equal to $t$) by defining
\[ B^m_{y_1,y_2} = \left\{ y \in B_{y_1,y_2} \, : \, \left|y-\frac{y_1+y_2}{2}\right| \in [(m-1)r, mr] \right\} \]
and
\begin{align*}
X_m(r,t) & = \Bigg\{ y \in X(r,t) \, : \, \exists i,j \text{ with } \mathrm{dist}(y,B^m_{\mu_i(t),\mu_j(t)}) \leq \left(2 \|\nu\|_{(L^\infty)^k} \left(r + \frac{2r^{1-\beta}}{\delta}\right)\right) \\
 & \quad \quad \quad \quad \text{and } \mathrm{dist}(y,B^m_{\mu_i(t),\mu_j(t)}) \leq \mathrm{dist}(y,B^{m'}_{\mu_i(t),\mu_j(t)}) \text{ for all } m'\neq m \Bigg\}.
\end{align*}
So $X(r,t) \subset \cup_{m=1}^{\lceil 2r^{-\beta-1}\rceil} X_m(r,t)$ (assuming $r$ is sufficiently small so that $\|\nu\|_{(L^\infty)^k}\leq r^{-\beta}$).
This implies
\begin{align*}
& \left| \int_{|y|\leq r^{-\beta}} \left( 2y - \mu_{j(t,y)}(t) - \mu_{j_r(t,y)}(t) \right) \cdot \left( \mu_{j(t,y)}(t) - \mu_{j_r(t,y)}(t) \right) \; \phi_Y(y|t) \; \mathrm{d} y \right| \\
& \quad \quad = \left| \int_{X(r,t)} \left( 2y - \mu_{j(t,y)}(t) - \mu_{j_r(t,y)}(t) \right) \cdot \left( \mu_{j(t,y)}(t) - \mu_{j_r(t,y)}(t) \right) \; \phi_Y(y|t) \; \mathrm{d} y \right| \\
& \quad \quad \leq 2 \sum_{m=1}^{\lceil 2r^{-\beta-1} \rceil} \left(mr + \|\nu\|_{(L^\infty)^k} \left(r + \frac{4r^{1-\beta}}{\delta} \right) \right) \int_{X_m(r,t)} |\mu_{j(t,y)}(t)-\mu_{j_r(t,y)}(t)| \phi_Y(y|t) \, \mathrm{d}y.
\end{align*}

Now if $y\in X_m(r,t)$ then $\left|y-\frac{\mu_j(t)+\mu_i(t)}{2}\right| \geq (m-1)r$ for some $i,j$ and therefore $|y| \geq (m-1) r - A$ where $\|\mu\|_{(L^\infty)^k}\leq A$.
In particular
\[ \phi_Y(y|t) \leq \left\{ \begin{array}{ll} c_1 (m-1-A)^\alpha & \text{if } m > A+1 \\ \|\phi_Y\|_{L^\infty} & \text{else.} \end{array} \right. \]
Note that
\begin{align*}
\mathrm{Vol}(X_m(r,t)) & \leq k(k-1) \left[ \mathrm{Vol}_{d-1}(B(0,mr))-\mathrm{Vol}_{d-1}(B(0,(m-1)r) \right] \left[ \|\nu\|_{(L^\infty)^k} \left( r + \frac{4r^{1-\beta}}{\delta} \right) \right] \\
 & \lesssim m^{d-1} r^{d-\beta}.
\end{align*}
Therefore
\begin{align*}
& \frac{1}{r} \left| \int_{|y|\leq r^{-\beta}} \left( 2y - \mu_{j(t,y)}(t) - \mu_{j_r(t,y)}(t) \right) \cdot \left( \mu_{j(t,y)}(t) - \mu_{j_r(t,y)}(t) \right) \; \phi_Y(y|t) \; \mathrm{d} y \right| \\
& \quad \quad \leq \frac{2 \|\mu\|_{(L^\infty)^k}}{r} \sum_{m=1}^{\lceil 2r^{-\beta-1} \rceil} \left( mr + \|\nu\|_{(L^\infty)^k} \left( r + \frac{4r^{1-\beta}}{\delta} \right) \right) \int_{X_m(r,t)} \phi_Y(y|t) \, \mathrm{d} y \\
& \quad \quad \leq \frac{2 \|\mu\|_{(L^\infty)^k} \|\phi_Y\|_{L^\infty}}{r} \sum_{m=1}^{A+1} \left( mr + \|\nu\|_{(L^\infty)^k} \left( r + \frac{4r^{1-\beta}}{\delta} \right) \right) \mathrm{Vol}(X_m(r,t)) \\
& \quad \quad \quad \quad + \frac{2 c_1 \|\mu\|_{(L^\infty)^k}}{r} \sum_{m=A+2}^{\lceil 2r^{-\beta-1} \rceil} \left( mr + \|\nu\|_{(L^\infty)^k} \left( r + \frac{4r^{1-\beta}}{\delta} \right) \right) (m-1-A)^\alpha \mathrm{Vol}(X_m(r,t)) \\
& \quad \quad \lesssim \frac{1}{r} \sum_{m=1}^{A+1} (rm + r^{1-\beta}) m^{d-1} r^{d-\beta} + \frac{1}{r} \sum_{m=A+2}^{\lceil 2r^{-\beta-1} \rceil} (rm + r^{1-\beta}) (m-1-A)^\alpha m^{d-1} r^{d-\beta} \\
& \quad \quad \lesssim r^{d-2\beta} + r^{d-2\beta} \sum_{m=1}^\infty m^{d+\alpha} \\
& \quad \quad = O(r^{d-2\beta})
\end{align*}
with the above following as $r^{d-\beta}$ is dominated by $r^{d-2\beta}$ as $r\to 0$.
Since $d-2\beta\geq 2(1-\beta)>0$ then the above is o(1).

Hence
\begin{align*}
& \frac{1}{r} \left| \int_{\mathbb{R}^d} \left| y- \mu_{j_r(t,y)}(t) \right|^2 - \left| y- \mu_{j(t,y)}(t) \right|^2 \; \phi_Y(y|t) \; \mathrm{d} y \right| \\
& \quad \quad \quad \quad \leq \frac{1}{r} \left| \int_{|y|\leq r^{-\beta}} \left| y- \mu_{j_r(t,y)}(t) \right|^2 - \left| y- \mu_{j(t,y)}(t) \right|^2 \; \phi_Y(y|t) \; \mathrm{d} y \right| + o(1) \quad \text{by~\eqref{eq:Rate:ResToBall}} \\
& \quad \quad \quad \quad = \frac{1}{r} \left| \int_{|y|\leq r^{-\beta}} \left( 2y - \mu_{j(t,y)}(t) - \mu_{j_r(t,y)}(t) \right) \cdot \left( \mu_{j(t,y)}(t) - \mu_{j_r(t,y)}(t) \right) \; \phi_Y(y|t) \; \mathrm{d} y \right| + o(1)
\end{align*}
which converges (uniformly in $t$) to zero.


Therefore
\begin{align*}
\partial f_\infty(\mu;\nu) & = \lim_{r\to 0} \frac{f_\infty(\mu+r\nu) - f_\infty(\mu)}{r} \\
 & = \lim_{r\to 0} \frac{1}{r} \Bigg\{ \int_0^1 \int_{\mathbb{R}^d} \Bigg( |y-\mu_{j_r(t,y)}(t)|^2 - |y-\mu_{j(t,y)}(t)|^2 + r^2 |\nu_{j_r(t,y)}(t)|^2 \\
 & \quad \quad \quad \quad - 2r\left(y-\mu_{j_r(t,y)}(t)\right) \cdot \nu_{j_r(t,y)}(t) \Bigg) \phi_Y(y|t) \phi_T(t) \; \text{d} y \text{d} t \\
 & \quad \quad \quad \quad +  \lambda \sum_{j=1}^k \left(2r (\nabla^s\nu_j,\nabla^s \mu_j) + r^2 \|\nabla^s\nu_j\|_{L^2}^2 \right) \Bigg\} \\
 & = -2\int_0^1 \int_{\mathbb{R}^d} \left(y-\mu_{j(t,y)}(t)\right) \cdot \nu_{j(t,y)}(t) \phi_Y(y|t) \phi_T(t) \; \text{d} y \text{d} t \\
 & \quad \quad \quad \quad + 2 \lambda \sum_{j=1}^k (\nabla^s \nu_j,\nabla^s \mu_j)
\end{align*}
by the dominated convergence theorem. 
\end{proof}

\begin{lemma} \label{lem:Rate:Gat2ndDerFinfty}
Under the same conditions as Lemma~\ref{lem:Rate:GatDerFinfty} we have
\[ \partial^2_-f_\infty(\mu;\nu,\nu) \geq 2\lambda \|\nabla^s\nu\|_{(L^2)^k}^2 - 2 \|\nu\|_{(L^\infty)^k}^2. \]
\end{lemma}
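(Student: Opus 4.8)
The plan is to evaluate the difference quotient $\frac1r\bigl(\partial f_\infty(\mu+r\nu;\nu)-\partial f_\infty(\mu;\nu)\bigr)$, whose $\liminf$ as $r\to0^+$ is $\partial^2_-f_\infty(\mu;\nu,\nu)$, by applying Lemma~\ref{lem:Rate:GatDerFinfty} at both base points $\mu$ and $\mu+r\nu$. Since $\nu\in(H^s)^k\hookrightarrow(L^\infty)^k$, for all small $r>0$ the perturbed centres $\mu+r\nu$ still have pairwise separation at least $\delta-2r\|\nu\|_{(L^\infty)^k}\ge\delta/2$ and lie in $(L^\infty)^k$, so Lemma~\ref{lem:Rate:GatDerFinfty} applies to $\mu+r\nu$ as well (its proof uses only a uniform positive lower bound on the separation and an $L^\infty$ bound on the centres). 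Writing $j_r(t,y)\in\argmin_j|y-\mu_j(t)-r\nu_j(t)|$, so that $j_0=j$, this gives
\[
\partial f_\infty(\mu+r\nu;\nu)=2\int_0^1\!\!\int_{\mathbb{R}^d}\bigl(\mu_{j_r(t,y)}(t)+r\nu_{j_r(t,y)}(t)-y\bigr)\cdot\nu_{j_r(t,y)}(t)\,\phi_Y(y|t)\phi_T(t)\,\mathrm{d}y\,\mathrm{d}t+2\lambda\sum_{j=1}^k\bigl(\nabla^s\nu_j,\nabla^s(\mu_j+r\nu_j)\bigr).
\]
Subtracting $\partial f_\infty(\mu;\nu)$, the regularisation term contributes, by bilinearity of $(\cdot,\cdot)$, exactly $2\lambda r\|\nabla^s\nu\|_{(L^2)^k}^2$, which after division by $r$ is the constant $2\lambda\|\nabla^s\nu\|_{(L^2)^k}^2$, producing the first term of the claim.

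It remains to treat the data term, which I would split as $I_1(r)+I_2(r)$, with $I_2(r)=2r\int_0^1\!\int_{\mathbb{R}^d}|\nu_{j_r(t,y)}(t)|^2\phi_Y(y|t)\phi_T(t)\,\mathrm{d}y\,\mathrm{d}t$ and $I_1(r)$ the remaining part, whose integrand is $2\bigl[(\mu_{j_r(t,y)}(t)-y)\cdot\nu_{j_r(t,y)}(t)-(\mu_{j(t,y)}(t)-y)\cdot\nu_{j(t,y)}(t)\bigr]$. For $I_2$: for $\phi_Y\phi_T$-a.e.\ $(t,y)$ the minimiser $j(t,y)$ is unique (Remark~\ref{rem:Rate:j}), and then $j_r(t,y)=j(t,y)$ for all sufficiently small $r>0$ exactly as shown in the proof of Lemma~\ref{lem:Rate:GatDerFinfty}; dominated convergence with dominating function $\|\nu\|_{(L^\infty)^k}^2$ therefore gives $\frac1r I_2(r)\to2\int_0^1\!\int_{\mathbb{R}^d}|\nu_{j(t,y)}(t)|^2\phi_Y(y|t)\phi_T(t)\,\mathrm{d}y\,\mathrm{d}t$, a quantity lying in $[0,2\|\nu\|_{(L^\infty)^k}^2]$.

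The one substantive point, which I expect to be the main obstacle, is to show $I_1(r)=o(r)$; this is a re-run of the estimates in the proof of Lemma~\ref{lem:Rate:GatDerFinfty}. The $I_1$ integrand vanishes wherever $j_r(t,y)=j(t,y)$ and is bounded in modulus by $2\bigl(|y|+\|\mu\|_{(L^\infty)^k}\bigr)\|\nu\|_{(L^\infty)^k}$, so on $\{|y|\ge r^{-\beta}\}$ its contribution, divided by $r$, is controlled by $\frac1r\int_{|y|\ge r^{-\beta}}|y|^2\phi_Y(y|t)\,\mathrm{d}y\to0$ by~\eqref{eq:Rate:ResToBall}, while on $\{|y|\le r^{-\beta}\}$ it is carried by the thin set $X(r,t)$ around the partition boundaries; decomposing $X(r,t)$ into the annuli $X_m(r,t)$ and using $\mathrm{Vol}(X_m(r,t))\lesssim m^{d-1}r^{d-\beta}$ together with the decay of $\phi_Y$ afforded by Assumption~4, the summation carried out as in that proof gives $\frac1r|I_1(r)|\lesssim r^{d-\beta-1}+r^{-\beta(\alpha+d+1)-1}\to0$, using $d-\beta>1$ for $d\ge2$ and $-\beta(\alpha+d+1)>1$; the case $d=1$ is simpler and omitted as before. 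Combining the three contributions,
\[
\partial^2_-f_\infty(\mu;\nu,\nu)=2\lambda\|\nabla^s\nu\|_{(L^2)^k}^2+2\int_0^1\!\!\int_{\mathbb{R}^d}|\nu_{j(t,y)}(t)|^2\phi_Y(y|t)\phi_T(t)\,\mathrm{d}y\,\mathrm{d}t\ \ge\ 2\lambda\|\nabla^s\nu\|_{(L^2)^k}^2\ \ge\ 2\lambda\|\nabla^s\nu\|_{(L^2)^k}^2-2\|\nu\|_{(L^\infty)^k}^2,
\]
the final (deliberately crude) inequality being the asserted bound.
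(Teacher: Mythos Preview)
Your proposal is correct and follows essentially the same route as the paper: compute $\partial f_\infty(\mu+r\nu;\nu)-\partial f_\infty(\mu;\nu)$ via Lemma~\ref{lem:Rate:GatDerFinfty}, observe that the regularisation contributes exactly $2\lambda r\|\nabla^s\nu\|_{(L^2)^k}^2$, and show that the ``boundary'' part $I_1(r)$ of the data term is $o(r)$ by the same tail-plus-thin-set decomposition used in that lemma. The paper's sketch focuses only on the $I_1$ estimate and leaves the rest implicit; your treatment of $I_2$ via dominated convergence is the natural way to fill this in, and in fact yields the sharper conclusion $\partial^2_-f_\infty(\mu;\nu,\nu)=2\lambda\|\nabla^s\nu\|_{(L^2)^k}^2+2\int|\nu_{j(t,y)}(t)|^2\phi_Y\phi_T\ge 2\lambda\|\nabla^s\nu\|_{(L^2)^k}^2$, so the subtraction of $2\|\nu\|_{(L^\infty)^k}^2$ in the stated bound is not actually needed.
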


\begin{proof}
The proof is similar to that of Lemma~\ref{lem:Rate:GatDerFinfty} so we only sketch the details.
The key step is in showing the following limit converges to zero
\begin{align*}
& \limsup_{r\to 0} \frac{1}{r} \int_0^1 \int_{\mathbb{R}^d} \Big\{ \left(\mu_{j_r(t,y)}(t) - y \right) \cdot \nu_{j_r(t,y)}(t) - \left(\mu_{j(t,y)}(t) - y \right) \cdot \nu_{j(t,y)}(t) \Big\} \phi_Y(y|t) \phi_T(t) \; \mathrm{d} y \; \mathrm{d} t \\
& \quad \quad \leq 2 \|\mu\|_{(L^\infty)^k} \|\nu\|_{(L^\infty)^k} \limsup_{r\to 0} \frac{1}{r} \int_0^1 \int_{j_r\neq j} \phi_Y(y|t) \phi_T(t) \; \mathrm{d} y \; \mathrm{d} t \\
& \quad \quad  \quad \quad + 2 \|\nu\|_{(L^\infty)^k} \limsup_{r\to 0} \frac{1}{r} \int_0^1 \int_{j_r\neq j} |y| \phi_Y(y|t) \phi_T(t) \; \mathrm{d} y \; \mathrm{d} t.
\end{align*}
As in the proof of Lemma~\ref{lem:Rate:GatDerFinfty} we divide $\mathbb{R}^d = B(0,r^{-\beta})\cup (\mathbb{R}^d\setminus B(0,r^{-\beta}))$ and recall that $X(r,t)$ contains the set where $j_r(t,y)\neq j(t,y)$ in the ball $B(0,r^{-\beta})$ and $X(r,t) \subset \cup_{m=1}^{\lceil 2r^{-\beta-1} \rceil} X_m(r,t)$ with $\mathrm{Vol}(X_m(r,t)) = O(m^{d-1}r^{d-\beta})$.
The limit
\[ \frac{1}{r} \int_{|y|\geq r^{-\beta}} \left( |y| + 1 \right) \phi_Y(y|t) \phi_T(t) \, \mathrm{d} y \, \mathrm{d} t \to 0 \]
as in the proof of Lemma~\ref{lem:Rate:GatDerFinfty}.
Now,
\begin{align*}
& \int_{j_r\neq j, |y|\leq r^{-\beta}} (|y|+1) \phi_Y(y|t) \; \mathrm{d} y \\
 & \quad \leq \int_{X(r,t)} (|y|+1) \phi_Y(y|t) \; \mathrm{d} y \\
 & \quad \leq \sum_{m=1}^{A+1} \int_{X_m(r,t)} (|y|+1) \phi_Y(y|t) \; \mathrm{d} y + c_1 \sum_{m=A+2}^{\lceil 2r^{-\beta-1} \rceil} \int_{X_m(r,t)} (|y|+1)|y|^\alpha \; \mathrm{d} y \\
 & \quad \leq \sum_{m=1}^{A+1} \|\phi_Y\|_{L^\infty} (A+mr+1) \mathrm{Vol}(X_m(r,t)) + c_1 \sum_{m=A+2}^{\lceil 2r^{-\beta-1} \rceil} (m-A)\left(m-1-A\right)^\alpha \mathrm{Vol}(X_m(r,t)) \\
 & \quad = O(r^{d-\beta}).
\end{align*}
Since $d-\beta \geq 2-\beta > 1$ then the above limit is $o(r)$.
\end{proof}

We now consider $Y_n$.
In particular we want to bound $\partial Y_n(\mu^\infty;\mu^n-\mu^\infty)$.

\begin{lemma} \label{lem:Rate:d2Yn}
Define $f_n,f_\infty:\Theta\to\mathbb{R}$ by~\eqref{eq:Intro:fn} and~\eqref{eq:Intro:finfty} respectively where $\Theta$ is given by~\eqref{eq:Conv:Theta}.
Take Assumptions~1, 2 and~4 and define
\[ Y_n: \Theta \to \mathbb{R}, \quad \quad \quad \quad Y_n(\mu) = \sqrt{n} \left(f_n(\mu) - f_\infty(\mu) \right). \]
Then for $\mu\in\Theta$, $\nu\in (H^s)^k$ we have that $Y_n$ is G\^ateaux differentiable at $\mu$ in the direction $\nu$ with
\begin{align*}
\partial Y_n (\mu;\nu) & = 2\sqrt{n} \left( \int_0^1 \int_\mathbb{R} \left( y-\mu_{j(t,y)}(t) \right) \cdot \nu_{j(t,y)}(t) \phi_Y(y|t) \phi_T(t) \; \text{d}y \text{d} t \right. \\
 & \left. - \frac{1}{n} \sum_{i=1}^n \left(y_i-\mu_{j(t_i,y_i)}(t_i)\right) \cdot \nu_{j(t_i,y_i)}(t_i)  \right)
\end{align*}
where $j(t,y)$ is defined by~\eqref{eq:Rate:jty}.
Furthermore, for a sequence $\nu^n$ with
\[ \|\nu^n\|_{(L^2)^k} = o_p(1) \quad \quad \text{and} \quad \quad \|\nu^n\|_{(H^s)^k}=O_p(1) \]
we have $\partial Y_n(\mu;\nu^n) = O_p(\|\nu^n\|_{(L^2)^k})$.
\end{lemma}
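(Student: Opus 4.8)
The plan is to handle the two assertions separately: the formula for $\partial Y_n$ is a short computation, and the probabilistic bound is where the real work lies. For the formula, write $Y_n=\sqrt n(f_n-f_\infty)$; the regularisation term $\lambda\sum_j\|\nabla^s\mu_j\|_{L^2}^2$ appears identically in $f_n$ and $f_\infty$, so it contributes nothing to $Y_n$ and hence nothing to $\partial Y_n$, giving $\partial Y_n(\mu;\nu)=\sqrt n\bigl(\partial f_n(\mu;\nu)-\partial f_\infty(\mu;\nu)\bigr)$ once both one-sided derivatives are known to exist. The derivative of $f_\infty$ is supplied by Lemma~\ref{lem:Rate:GatDerFinfty} (its hypothesis $\mu\in(L^\infty)^k$ is automatic here, since $s\ge1$ gives $H^s\hookrightarrow L^\infty$). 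For $f_n$ the data term is the finite sum $\frac1n\sum_i\bigwedge_j|y_i-\mu_j(t_i)-r\nu_j(t_i)|^2$; because the observations avoid the measure-zero, continuous partition boundaries almost surely (Remark~\ref{rem:Rate:j}), for each $i$ the minimising index is $j(t_i,y_i)$ for all sufficiently small $r>0$, so the one-sided derivative of that summand is $2(\mu_{j(t_i,y_i)}(t_i)-y_i)\cdot\nu_{j(t_i,y_i)}(t_i)$. Subtracting the $f_\infty$-derivative and cancelling the regularisation reproduces exactly the stated expression, which I would record compactly as
\[ \partial Y_n(\mu;\nu)=-2\sqrt n\,(P_n-P)h_\nu,\qquad h_\nu(t,y):=\bigl(y-\mu_{j(t,y)}(t)\bigr)\cdot\nu_{j(t,y)}(t), \]
observing that $\nu\mapsto h_\nu$ is linear.

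The second ingredient is a variance estimate. For fixed $\nu$, $\sqrt n(P_n-P)h_\nu$ is a centred sum of i.i.d.\ terms, so $\operatorname{Var}\bigl(\sqrt n(P_n-P)h_\nu\bigr)=\operatorname{Var}_P(h_\nu)\le Ph_\nu^2$. Grouping the integral defining $Ph_\nu^2$ according to which cell of the partition $(t,y)$ lies in gives
\[ Ph_\nu^2\le\sum_{j=1}^k\int_0^1|\nu_j(t)|^2\phi_T(t)\left(\int_{\mathbb R^d}|y-\mu_j(t)|^2\phi_Y(y|t)\,\mathrm{d}y\right)\mathrm{d}t, \]
and by Assumption~4 the inner integral is at most $2\sup_t\int_{\mathbb R^d}|y|^2\phi_Y(y|t)\,\mathrm{d}y+2\|\mu\|_{(L^\infty)^k}^2<\infty$, uniformly in $t$ and $j$ (Assumption~4 forces a $t$-uniform bound on the second moment of $\phi_Y(\cdot|t)$, and $\phi_T$ is bounded since it is continuous on $[0,1]$). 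Hence $Ph_\nu^2\le C_\ast\|\nu\|_{(L^2)^k}^2$ for a constant $C_\ast$ depending only on $\mu$, $\phi_T$ and the constants in Assumption~4.

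Finally I would deduce the probabilistic bound. Were $\nu^n$ deterministic, Chebyshev applied to $\partial Y_n(\mu;\nu^n)=-2\sqrt n(P_n-P)h_{\nu^n}$ with the above variance bound would give $\mathbb P\bigl(|\partial Y_n(\mu;\nu^n)|\ge M\|\nu^n\|_{(L^2)^k}\bigr)\le 4C_\ast/M^2$, i.e.\ $O_p(\|\nu^n\|_{(L^2)^k})$. The genuine obstacle is that $\nu^n$ is data-dependent, so Chebyshev cannot be applied to $h_{\nu^n}$ directly; one needs some uniformity. Here I would invoke $\|\nu^n\|_{(H^s)^k}=O_p(1)$: given $\varepsilon>0$ choose $R$ with $\mathbb P(\|\nu^n\|_{(H^s)^k}>R)<\varepsilon$ for $n$ large, and on the complement confine $\nu^n$ to the $(H^s)^k$-ball $\mathcal B_R$, which by the Rellich--Kondrachov theorem is compact in $(L^2)^k$ with $L^2$-metric entropy $\log N(\eta,\mathcal B_R,(L^2)^k)\lesssim\eta^{-1/s}$. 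Since $\nu\mapsto h_\nu$ is $C_\ast^{1/2}$-Lipschitz from $(L^2)^k$ into $L^2(P)$ and $s\ge1$ makes the corresponding entropy integral finite, one can control $\sup_{\nu\in\mathcal B_R,\ \|\nu\|_{(L^2)^k}\le\delta}|\sqrt n(P_n-P)h_\nu|$ as a function of $\delta$; combined with a peeling of $\mathcal B_R$ into dyadic $L^2$-shells (applying the variance bound shell-by-shell) and with $\|\nu^n\|_{(L^2)^k}=o_p(1)$, this should yield the claimed $O_p(\|\nu^n\|_{(L^2)^k})$ after letting $\varepsilon\to0$. Localising the empirical process along a \emph{random} direction of vanishing $L^2$-norm at exactly the scale $\|\nu^n\|_{(L^2)^k}$ (rather than a fractional power of it) is the step I expect to be delicate; it is precisely here that one wants to exploit the linearity of $\nu\mapsto h_\nu$, so that $\sqrt n(P_n-P)h_\nu$ is a random linear functional whose fluctuations can be estimated coordinate-wise in a basis adapted simultaneously to the $L^2$ and $H^s$ scales.
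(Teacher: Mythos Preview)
Your derivation of the formula for $\partial Y_n$ matches the paper's (which simply remarks that the computation is analogous to Lemma~\ref{lem:Rate:GatDerFinfty} and omits the details). For the probabilistic bound, however, you take a substantially heavier route than the paper does. The paper invokes no entropy estimates, no peeling, and no Rellich--Kondrachov compactness: it goes straight to the linearity you flag in your final sentence, expanding $\nu^n/\|\nu^n\|_{(L^2)^k}$ in the Fourier basis $(e_m)$ of $(L^2)^k$, writing
\[
\partial Y_n\!\left(\mu;\frac{\nu^n}{\|\nu^n\|_{(L^2)^k}}\right)=\sum_m \frac{(\nu^n,e_m)}{\|\nu^n\|_{(L^2)^k}}\,\partial Y_n(\mu;e_m),
\]
bounding each second moment $V_m=\mathbb E\bigl(\partial Y_n(\mu;e_m)\bigr)^2$ by a single constant $C$ (via the same $L^\infty$/second-moment considerations you use for $Ph_\nu^2$), and then applying Markov's inequality term by term to conclude $\mathbb P(|\cdot|\ge M)\le \sqrt{C}/M$. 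Notably, the paper's argument never calls on the hypothesis $\|\nu^n\|_{(H^s)^k}=O_p(1)$, whereas your empirical-process route leans on it for compactness. Your approach is defensible and is more explicit about the data-dependence of $\nu^n$ that you correctly identify as the delicate point; but the coordinate-wise basis argument you hint at in your closing sentence is precisely what the paper does, and it short-circuits the chaining machinery entirely.
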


\begin{proof}
Calculating the G\^ateaux derivative is similar to Lemma~\ref{lem:Rate:GatDerFinfty} and is omitted.
By linearity and continuity of $\partial Y_n$ we can write
\[ \partial Y_n\left(\mu;\frac{\nu^n}{\|\nu^n\|_{(L^2)^k}}\right) = \sum_m \frac{(\nu^n,e_m)}{\|\nu^n\|_{(L^2)^k}} \partial Y_n(\mu;e_m) \]
where $e_m$ is the Fourier basis for $(L^2)^k$ (we assume $e_m=(\hat{e}_{m_1},\dots,\hat{e}_{m_k})$ where $\hat{e}_m$ is the Fourier basis for $L^2$).
Let $V_m=\mathbb{E}\left(\partial Y_n(\mu;e_m) \right)^2$ and $Z_i=(y_i-\mu_{j(t_i,y_i)}(t_i))\cdot \hat{e}_{m_j(t_i,y_i)}$, then
\begin{align*}
V_m & = \frac{4}{n} \mathbb{E} \left(\sum_{i=1}^n(Z_i - \mathbb{E}Z_i) \right)^2 \\
 & = 4\mathbb{E}\left(Z_1-\mathbb{E} Z_1\right)^2 \\
 & \leq 4 \left( 4\sum_{j=1}^k \sum_{l=1}^k \|\mu^\dagger_l - \mu_j\|_{L^\infty} + \mathbb{E} \left|\epsilon_1\right|^2 \right) =:C.
\end{align*}
By Assumptions 1 and 2 and since $\mu\in (L^\infty)^k$ (by the embedding of $(H^s)^k$ into $(L^\infty)^k)$) $C$ is finite.
Therefore, 
\begin{align*}
\mathbb{P} \left(\left|\partial Y_n\left(\mu;\frac{\nu^n}{\|\nu^n\|_{(L^2)^k}}\right)\right| \geq M \right) & \leq \frac{1}{M} \mathbb{E}\left(\left|\partial Y_n\left(\mu;\frac{\nu^n}{\|\nu^n\|_{(L^2)^k}}\right)\right| \right) \quad \text{by Markov's inequality} \\
 & \leq \sum_m \frac{|(\nu^n,e_m)|}{\|\nu^n\|_{(L^2)^k}} \frac{1}{M} \mathbb{E}\left( |\partial Y_n(\mu;e_m)| \right) \\
 & \leq \sum_m \frac{|(\nu^n,e_m)|}{\|\nu^n\|_{(L^2)^k}} \frac{\sqrt{V_m}}{M} \quad \text{by H\"older's inequality} \\
 & \leq \frac{\sqrt{C}}{M}.
\end{align*}
Which implies $\partial Y_n\left(\mu;\frac{\nu^n}{\|\nu^n\|_{(L^2)^k}}\right)=O_p(1)$.
\end{proof}

We now have the necessary pieces in place to prove Theorem~\ref{thm:Rate:StrongConv} and Corollary~\ref{cor:Rate:Rate}.

\begin{proof}[Proof of Theorem~\ref{thm:Rate:StrongConv}]
By Theorem~\ref{thm:Conv:Conv} we have that (up to subsequences) $\|\mu^n- \mu^\infty\|_{(L^2)^k}=o_p(1)$, $\|\mu^n- \mu^\infty\|_{(L^\infty)^k}=o_p(1)$ and $\|\mu^n\|_{(H^s)^k}=O_p(1)$.

By Theorem~\ref{thm:Prelim:GatDer:Taylor}, for some $t\in [0,1]$, we have
\begin{align*}
f_\infty(\mu^n) & \geq f_\infty(\mu^\infty) + \partial f_\infty\left(\mu^\infty;\mu^n-\mu^\infty\right) + \frac{1}{2} \partial^2_- f_\infty\left((1-t)\mu^\infty+t\mu^n;\mu^n-\mu^\infty\right) \\
 & \geq f_\infty(\mu^\infty) + 2\lambda \|\nabla^s (\mu^n-\mu^\infty)\|_{(L^2)^k}^2 - 2 \|\mu^n-\mu^\infty\|_{(L^\infty)^k}^2
\end{align*}
after applying Lemma~\ref{lem:Rate:Gat2ndDerFinfty} and since $\mu^\infty$ minimizes $f_\infty$ the first derivative must be zero.

Similarly, and using Lemma~\ref{lem:Rate:d2Yn},
\[ Y_n(\mu^n) = Y_n(\mu^\infty) + O_p\left(\partial Y_n\left(\mu^\infty;\mu^n-\mu^\infty\right)\right) = Y_n(\mu^\infty) + O_p\left(\|\mu^n-\mu^\infty\|_{(L^2)^k}\right). \]
From the definition of $Y_n$ we also have
\[ f_n(\mu^n) = f_\infty(\mu^n) + \frac{1}{\sqrt{n}} Y_n(\mu^n). \]
Substituting into the above we obtain
\begin{align*}
f_n(\mu^n) & \geq f_\infty(\mu^\infty) + \frac{1}{\sqrt{n}} Y_n(\mu^n) + 2\lambda \|\nabla^s \left(\mu^n-\mu^\infty\right)\|_{(L^2)^k}^2 - 2 \|\mu^n-\mu^\infty\|_{(L^\infty)^k}^2 \\
 & = f_\infty(\mu^\infty) + \frac{1}{\sqrt{n}} Y_n(\mu^\infty) + O_p\left(\frac{\|\mu^n-\mu^\infty\|_{(L^2)^k}}{\sqrt{n}}\right) + 2\lambda \|\nabla^s \left(\mu^n-\mu^\infty\right)\|_{(L^2)^k}^2 \\
 & \quad \quad \quad \quad \quad \quad - 2 \|\mu^n-\mu^\infty\|_{(L^\infty)^k}^2 \\
 & = f_n(\mu^\infty) + O_p\left(\frac{\|\mu^n - \mu^\infty\|_{(L^2)^k}}{\sqrt{n}} + \|\mu^n-\mu^\infty\|_{(L^\infty)^k}^2\right) + 2\lambda \|\nabla^s\left(\mu^n-\mu^\infty\right)\|_{(L^2)^k}^2.
\end{align*}
Rearranging and using $f_n(\mu^n) \leq f_n(\mu^\infty)$ we have
\begin{align*}
2\lambda \|\nabla^s \left(\mu^n-\mu^\infty\right)\|_{(L^2)^k}^2 & \leq \left(f_n(\mu^n) - f_n(\mu^\infty)\right) + O_p\left(\frac{\|\mu^n - \mu^\infty\|_{(L^2)^k}}{\sqrt{n}}+\|\mu^n-\mu^\infty\|_{(L^\infty)^k}^2\right) \\
 & \leq O_p\left(\frac{\|\mu^n - \mu^\infty\|_{(L^2)^k}}{\sqrt{n}}+\|\mu^n-\mu^\infty\|_{(L^\infty)^k}^2\right).
\end{align*}
We have shown, via Theorem \ref{thm:Conv:Conv}, that $\|\nabla^s\left(\mu^n-\mu^\infty\right)\|_{(L^2)^k}\to 0$ and therefore $\mu^n\to \mu$ strongly in $H^s$ and in probability.
\end{proof}

\begin{proof}[Proof of Corollary~\ref{cor:Rate:Rate}]
The proof is similar to the proof of Theorem~\ref{thm:Rate:StrongConv} since
\begin{align*}
f_\infty(\mu^n) & \geq f_\infty(\mu^\infty) + \partial f_\infty\left(\mu^\infty;\mu^n-\mu^\infty\right) + \int_0^1 (1-t) \partial^2_- f_\infty\left((1-t)\mu^\infty+t\mu^n;\mu^n-\mu^\infty\right) \\
 & \geq f_\infty(\mu^\infty) + \kappa \|\mu^n-\mu^\infty\|_{(H^s)^k}^2.
\end{align*}
One can then show
\begin{align*}
f_\infty(\mu^n) - f_\infty(\mu^\infty) & = f_n(\mu^n) - \frac{1}{\sqrt{n}} Y_n(\mu^n) - f_\infty(\mu^\infty) \\
 & = f_n(\mu^n) - f_\infty(\mu^\infty) - \frac{1}{\sqrt{n}} Y_n(\mu^\infty) + O_p\left(\frac{\|\mu^n-\mu^\infty\|_{(L^2)^k}}{\sqrt{n}}\right) \\
 & = f_n(\mu^n) - f_n(\mu^\infty) + O_p\left(\frac{\|\mu^n-\mu^\infty\|_{(L^2)^k}}{\sqrt{n}}\right) \\
 & \leq O_p\left(\frac{\|\mu^n-\mu^\infty\|_{(L^2)^k}}{\sqrt{n}}\right)
\end{align*}
Hence,
\[ \kappa \|\mu^n-\mu^\infty\|_{(H^s)^k} \|\mu^n-\mu^\infty\|_{(L^2)^k} \leq \kappa \|\mu^n - \mu^\infty\|_{(H^s)^k}^2 \leq O_p\left(\frac{\|\mu^n-\mu^\infty\|_{(L^2)^k}}{\sqrt{n}}\right). \]
Dividing by $\|\mu^n-\mu^\infty\|_{(L^2)^k}$ completes the proof.
\end{proof}

\section*{Acknowledgements}

Part of this work was completed whilst MT was part of MASDOC at the University of Warwick and was supported by an EPSRC Industrial CASE Award PhD Studentship with Selex ES Ltd.
The authors would also like to thank Neil Cade (Selex ES Ltd.) and Florian Theil (Warwick University) whose discussions enhanced this paper as well as Riccardo Cristoferi for feedback on an earlier version of the manuscript.
The authors also gratefully acknowledge the feedback of the referee whose comments significantly improved this manuscript.



\bibliographystyle{plain}
\bibliography{references}

\end{document}